\theoremstyle{definition}\newtheorem{definition}{Definition}
\newtheorem{notation}[definition]{Notation}
\newtheorem{remark}[definition]{Remark}}
\newtheorem{proposition}[definition]{Proposition}
\newtheorem{lemma}[definition]{Lemma}
\newtheorem{theorem}[definition]{Theorem}
\def\SLtwo{\mathrm{SL}_2(\mathbb{Z})}
\def\SLn{\mathrm{SL}_n(\mathbb{Z})}
\def\SLnR{\mathrm{SL}_n(\mathbb{R})}
\def\compZ{\mathcal{Z}}
\newcommand{\C}{\mathbb{C}}
\newcommand{\embed}[1]{\prec_{#1}}
\newcommand{\cR}{\mathcal{R}}
\newcommand{\acts}{\curvearrowright}
\newcommand{\actson}{\curvearrowright}
\newcommand{\SL}{\operatorname{SL}}
\newcommand{\rL}{\mathord{\text{\rm L}}}
\newcommand{\mcL}{\mathord{\mathcal L}}
\newcommand{\N}{\mathbb{N}}
\newcommand{\T}{\mathbb{T}}
\newcommand{\cF}{\mathcal{F}}
\newcommand{\id}{\mathord{\operatorname{id}}}
\newcommand{\si}{\sigma}
\newcommand{\recht}{\rightarrow}
\newcommand{\cU}{\mathcal{U}}
\newcommand{\vphi}{\varphi}
\newcommand{\cW}{\mathcal{W}}
\newcommand{\al}{\alpha}
\newcommand{\eps}{\varepsilon}
\newcommand{\Tr}{\operatorname{Tr}}
\newcommand{\ovt}{\mathbin{\overline{\otimes}}}
\newcommand{\B}{\operatorname{B}}
\newcommand{\om}{\omega}
\newcommand{\ot}{\otimes}
\newcommand{\Ad}{\operatorname{Ad}}
\newcommand{\cN}{\mathcal{N}}
\begin{document}

\begin{center}
{\boldmath\LARGE\bf II$_1$ factors and equivalence relations with distinct fundamental groups}

\bigskip

{\sc by Jan Keersmaekers\footnote{K.U.Leuven, Department of Mathematics, jan.keersmaekers@wis.kuleuven.be \\ Supported by KU Leuven BOF research grant OT/08/032} and An Speelman\footnote{K.U.Leuven, Department of Mathematics, an.speelman@wis.kuleuven.be \\ Research Assistant of the Research Foundation --
    Flanders (FWO)}}
\end{center}

\begin{abstract}\noindent
We construct a group measure space II$_1$ factor that has two non-conjugate Cartan subalgebras. We show that the fundamental group of the II$_1$ factor is trivial, while the fundamental group of the equivalence relation associated with the second Cartan subalgebra is non-trivial. This is not absurd as the second Cartan inclusion is twisted by a 2-cocycle.
\end{abstract}

\section{Introduction and statement of the main results}

Murray and von Neumann introduced the fundamental group for type II$_1$ factors in \cite{MvN43}. This notion is not related to the fundamental group of a topological space, but instead is a subgroup of the positive real numbers. Murray and von Neumann showed that the fundamental group of the hyperfinite type II$_1$ factor is $\mathbb{R}^*_+$ itself. They were not able to compute the invariant for any other II$_1$ factor. Their line ``the general behavior of the above invariants - including fundamental groups - remains an open question'' still has some truth today. Calculating the fundamental group of a II$_1$ factor is a very difficult problem.

In fact, it took almost 40 years before one proved that the fundamental group of a II$_1$ factor can be different from $\mathbb{R}^*_+$. The first examples of II$_1$ factors with ``small'' fundamental group were given in \cite{C80}, as Connes showed that the fundamental group of a property (T) factor is countable. However up to now, one is unable to compute the fundamental group of any property (T) factor.

The first explicit computations of fundamental groups of II$_1$ factors, other than the hyperfinite one, were established around 2002. Using his deformation-rigidity theory, Popa proved that the fundamental group of the group von Neumann algebra $\mathcal{L}(\SLtwo \ltimes \mathbb{Z}^2)$ is trivial (\cite{Po02}). Later he showed that any countable subgroup of $\mathbb{R}^*_+$ arises as the fundamental group of a type II$_1$ factor (\cite{Po03}). Alternative constructions for these results can be found in \cite{IPP05} and \cite{H07}.

In \cite{PV08a} and \cite{PV08c}, Popa and Vaes showed that a large class of uncountable subgroups of $\mathbb{R}^*_+$ appear as the fundamental group of a type II$_1$ factor. This was done through an existence theorem, using a Baire category argument. In \cite{D10}, Deprez provides explicit examples of this phenomenon.

In this paper we study the link between the fundamental group of a II$_1$ factor and the fundamental group of the equivalence relation associated to a Cartan inclusion in the factor. The notion of fundamental group for countable equivalence relations is the natural translation of the notion for II$_1$ factors. More or less parallel to the results mentioned above, rigidity theory for equivalence relations became more developed, yielding calculations of fundamental groups of equivalence relations in \cite{GG87}, \cite{Ga01}.

In \cite{FM75}, it was shown that an inclusion of a Cartan subalgebra in a II$_1$ factor gives rise to a II$_1$ equivalence relation $\mathcal{R}$ and a scalar 2-cocycle. If the factor is of group measure space type, i.e. $M = \rL^\infty(X) \rtimes G$ where $G \actson (X,\mu)$ is a free ergodic probability measure preserving action, then $\mathcal{R}$ is the orbit equivalence relation of the action $G \acts X$ and the 2-cocycle is trivial. It follows immediately from the definitions that the fundamental group of the orbit equivalence relation $\mathcal{R}(G \curvearrowright X)$ is a subgroup of $\mathcal{F}(\mathrm{L}^\infty(X)\rtimes G)$. In many cases, one calculates the fundamental group of a group measure space II$_1$ factor by proving that equality holds. However, in \cite[\S 6.1]{Po06} Popa gives an example of a free ergodic probability measure preserving action of a countable group such that $\mathcal{F}(\mathcal{R}(G \curvearrowright X))=\{1\}$ and $\mathcal{F}(\mathrm{L}^\infty(X)\rtimes G)= \mathbb{
R}^*_+$, so the biggest possible difference can be realized. Earlier, Popa found free ergodic probability measure preserving actions for which $\mathcal{F}(\cR(G \actson X))$ is countable, while the II$_1$ factor $\rL^\infty(X) \rtimes G$ has fundamental group $\mathbb{R}^*_+$ (\cite[Corollary of Theorem 3]{Po90} and \cite[Corollary 4.7.2]{Po86}).

The question we address is in some sense ``opposite'' to these results of Popa. We construct a group measure space II$_1$ factor $M = \rL^\infty(X) \rtimes G$ with trivial fundamental group, but admitting a Cartan subalgebra $A$ that is non-conjugate to $\rL^\infty(X)$ and for which the associated equivalence relation has non-trivial fundamental group. By the previous paragraphs, the equivalence relation associated to the second Cartan inclusion $A \subset M$ must come with a non-trivial 2-cocycle. Indeed, otherwise its fundamental group would be a subset of $\mathcal{F}(M)$. Therefore our results are closely related to earlier constructions of II$_1$ factors with several Cartan subalgebras.

The first example of a II$_1$ factor with two Cartan subalgebras that are not conjugate by an automorphism was given by Connes and Jones in \cite{CJ82}. In \cite[\S 7]{OP08}, Ozawa and Popa gave more explicit examples of this phenomenon. In fact, a II$_1$ factor can have uncountably many non-conjugate Cartan subalgebras (see \cite{Po86}, \cite{Po90}, \cite{Po06} and \cite{SV11}).

The concrete II$_1$ factor $M$ that we construct in this paper is an amalgamated free product (AFP). Several rigidity theorems, including computations of fundamental groups, for such AFP II$_1$ factors were obtained in \cite{IPP05}. Our proof that $\mathcal{F}(M)=\{1\}$ uses the techniques of \cite{IPP05} and the recent work of \cite{Io12} on Cartan subalgebras in AFP II$_1$ factors.

\subsection*{Acknowledgement}

The authors are grateful to Stefaan Vaes for his suggestions and many fruitful discussions.

\subsection*{Statements of the main results}

We first fix the setting of the example.

\begin{notation}\label{notation}
Let $n \geq 6$ even and define $\Sigma$ as $$\Sigma:=\left(\begin{matrix}
\SLtwo & 0 &  \dots & 0\\
0 & \SLtwo & \dots & 0\\
\vdots & \vdots & \ddots & \vdots\\
0 & 0 & \dots & \SLtwo
\end{matrix}\right) < \SLn \;.
$$
Set $$\Gamma = \SLn \ast_{\Sigma} (\Sigma \times \Lambda) \; ,$$ where $\Lambda$ is a countable infinite group. Let $q: \Gamma \to \SLn$ be the quotient map, i.e. $q(g)=g$ for all $g \in \SLn$ and $q(\lambda)=e$ for all $\lambda \in \Lambda$. Fix a prime $p$ and consider the action $\Gamma \overset{\alpha}{\curvearrowright} \mathbb{Z}_p^n$ through $q$. Remark that this action preserves $\mathbb{Z}^n$. Write $G := \mathbb{Z}^n \rtimes \Gamma$.

Define $X := (\mathbb{Z}_p^n)^\Gamma$. Embed $\mathbb{Z}_p^n$ into $X$ by $i: \mathbb{Z}_p^n \to X: z \mapsto (\alpha_g(z))_{g \in \Gamma}$. Now consider the action $G \curvearrowright X$, where $\mathbb{Z}^n$ acts by translation after embedding by $i$, and $\Gamma$ acts by Bernoulli shift. One can see that this is a free ergodic probability measure preserving action. Throughout this paper we denote by $M$
$$M := \mathrm{L}^\infty(X) \rtimes G = \rL^\infty((\mathbb{Z}_p^n)^\Gamma)\rtimes (\mathbb{Z}^n \rtimes (\SLn \ast_{\Sigma} (\Sigma \times \Lambda)))$$ the associated group measure space II$_1$ factor.
\end{notation}
We will prove that $M$ has a Cartan subalgebra that is non-conjugate to $\rL^\infty(X)$. We describe the associated equivalence relation, that comes with a non-trivial 2-cocycle. We compute its fundamental group as well as $\mathcal{F}(\mathcal{R}(G \curvearrowright X))$.

\begin{theorem}
With $G \curvearrowright X$ as in Notation \ref{notation}, $M:=\rL^\infty(X)\rtimes G$ has at least two Cartan subalgebras $A_1=\mathrm{L}^\infty(X)$ and $A_2=\mathrm{L}^\infty\left(\frac{(\mathbb{Z}_p^n)^\Gamma}{i(\mathbb{Z}_p^n)}\right) \ovt \mathcal{L}(\mathbb{Z}^n)$ that are not conjugate by an automorphism of $M$. Denote the associated equivalence relations by $\mathcal{R}_1$ and $\mathcal{R}_2$. Then the following holds:\begin{enumerate}
\item $\mathcal{F}(M)=\mathcal{F}(\mathcal{R}_1)=\{1\}  ,$
\item $\mathcal{F}(\mathcal{R}_2)=\{p^{kn}\mid k \in \mathbb{Z}\} \; .$
\end{enumerate}
\end{theorem}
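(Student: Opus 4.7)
\textbf{Identifying $A_2$ as a Cartan and describing $\mathcal{R}_2$.}
Set $F := i(\mathbb{Z}_p^n) \subset X$. Because the action of $\mathbb{Z}^n$ on $X$ factors through the finite subgroup $F$, the $\mathbb{Z}^n$-invariants in $\rL^\infty(X)$ are exactly $\rL^\infty(X/F)$, which coincides with the center of $\rL^\infty(X) \rtimes \mathbb{Z}^n$. A direct computation then yields $\mathcal{L}(\mathbb{Z}^n)' \cap (\rL^\infty(X) \rtimes \mathbb{Z}^n) = A_2$, and maximal abelianness of $A_2$ in $M$ follows from essential freeness of the induced Bernoulli action $\Gamma \actson X/F$. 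The normalizer of $A_2$ contains $\{u_\gamma\}_{\gamma \in \Gamma}$ (which preserves both tensor factors of $A_2$) and all characters $\chi \in \widehat{X}$ (because $\chi u_g \chi^* = \chi(g) u_g \in \mathcal{L}(\mathbb{Z}^n)$), and together with $A_2$ these generate $M$. Under the Fourier identification $A_2 \cong \rL^\infty(X/F \times \mathbb{T}^n)$, the normalizer modulo $\mathcal{U}(A_2)$ contains the countable group $\mathbb{Z}_p^n \rtimes \Gamma$, with $\mathbb{Z}_p^n = \widehat{F}$ embedded as the $p$-torsion $(1/p)\mathbb{Z}^n/\mathbb{Z}^n \subset \mathbb{T}^n$ and acting by translation, while $\Gamma$ acts Bernoulli on $X/F$ and through $q: \Gamma \to \SLn$ on $\mathbb{T}^n$. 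The Feldman--Moore 2-cocycle is non-trivial because no set-theoretic section of the short exact sequence $0 \to \widehat{X/F} \to \widehat{X} \to \widehat{F} \to 0$ can be $\Gamma$-equivariant.

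\textbf{$\mathcal{F}(M) = \mathcal{F}(\mathcal{R}_1) = \{1\}$ and non-conjugacy of the Cartans.}
The factor $M$ inherits from $\Gamma$ an amalgamated free product decomposition $M = M_1 \ast_{M_\Sigma} M_2$, with $M_i = \rL^\infty(X) \rtimes (\mathbb{Z}^n \rtimes \Gamma_i)$ for $\Gamma_1 = \SLn$, $\Gamma_2 = \Sigma \times \Lambda$, $\Gamma_\Sigma = \Sigma$. For $t \in \mathcal{F}(M)$, an isomorphism $\theta: M \to M^t$ sends $\rL^\infty(X)$ to another Cartan of $M^t$; by Ioana's uniqueness theorem for Cartan subalgebras of AFP II$_1$ factors \cite{Io12} combined with the IPP deformation-rigidity methods \cite{IPP05}, this Cartan is unitarily conjugate in $M^t$ to the natural amplification of $\rL^\infty(X)$. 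Popa's cocycle superrigidity for the Bernoulli action of $\Gamma$ (applicable via property (T) of $\SLn \subset \Gamma$) together with the rigidity of $\SLn \actson \mathbb{Z}^n$ then forces $t = 1$. Since $\mathcal{R}_1$ carries no cocycle, $\mathcal{F}(\mathcal{R}_1) \subseteq \mathcal{F}(M) = \{1\}$. Non-conjugacy of $A_1$ and $A_2$ is immediate: any automorphism of $M$ conjugating them would produce an isomorphism $\mathcal{R}_1 \cong \mathcal{R}_2$, contradicting the different fundamental groups.

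\textbf{Reduction to $\mathcal{F}(\mathcal{R}')$ and $\{p^{kn}\} \subseteq \mathcal{F}(\mathcal{R}_2)$.}
Restricting $\mathcal{R}_2$ to the fundamental domain $X/F \times [0, 1/p)^n$ for the $\mathbb{Z}_p^n$-translation action on $\mathbb{T}^n$, and rescaling $\zeta \mapsto p\zeta$ on the second factor, one checks $\mathcal{R}_2^{1/p^n} \cong \mathcal{R}'$, where $\mathcal{R}' := \mathcal{R}(\Gamma \actson X/F \times \mathbb{T}^n)$; in particular $\mathcal{F}(\mathcal{R}_2) = \mathcal{F}(\mathcal{R}')$. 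To obtain $\{p^{kn}\} \subseteq \mathcal{F}(\mathcal{R}')$, note that the multiplication-by-$p$ map $\mathbb{T}^n \to \mathbb{T}^n$, $\zeta \mapsto p\zeta$, is $\SLn$-equivariant, measure-preserving and $p^n$-to-$1$, so combined with $\id_{X/F}$ it extends to a $\Gamma$-equivariant surjection $\pi: X/F \times \mathbb{T}^n \to X/F \times \mathbb{T}^n$. The pullback $\pi^*$ embeds $\rL(\mathcal{R}')$ into itself as a Cartan-preserving subfactor of Jones index $p^n$, which yields $p^n \in \mathcal{F}(\mathcal{R}')$ and hence $\{p^{kn}\} \subseteq \mathcal{F}(\mathcal{R}_2)$.

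\textbf{Main obstacle: the converse inclusion.}
The hardest step is $\mathcal{F}(\mathcal{R}') \subseteq \{p^{kn}\}$. I would combine Popa's cocycle superrigidity for the Bernoulli factor of the $\Gamma$-action on $X/F$ (forcing the $X/F$-coordinate of any stable self-orbit-equivalence of $\mathcal{R}'$ to come essentially from a group automorphism) with a classification of measure-scaling self-maps of $\mathbb{T}^n$ compatible with the $\SLn$-action. Using that the only $\SLn$-equivariant measure-preserving covers of $\mathbb{T}^n$ are, up to conjugacy, multiplications by integers, and tracking how the scaling must respect the non-trivial 2-cocycle identified in the first paragraph, one expects any $t \in \mathcal{F}(\mathcal{R}')$ to be of the form $p^{kn}$. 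The delicate interplay between rigidity on the Bernoulli side and the scaling geometry on $\mathbb{T}^n$ is the technical heart of this step.
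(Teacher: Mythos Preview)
Your outline contains a self-contradictory step in the argument for $\mathcal{F}(M)=\{1\}$. You claim that any stable automorphism $\theta$ sends $\rL^\infty(X)$ to a Cartan which, by ``Ioana's uniqueness theorem for Cartan subalgebras of AFP II$_1$ factors,'' is unitarily conjugate to $\rL^\infty(X)$. But the very theorem you are proving asserts that $M$ has \emph{two} non-conjugate Cartans, so no such uniqueness is available. What \cite{Io12} actually provides here (Theorem~7.1 there) is a statement about the amenable subalgebra $B=\rL^\infty(X)\rtimes\mathbb{Z}^n$ in the crossed-product decomposition $M=B\rtimes\Gamma$: any stable automorphism satisfies $\alpha(B)\embed{M}B$ and $B\embed{M}\alpha(B)$. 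The paper then passes to $\mathcal{Z}(B)=\rL^\infty(X/i(K))$, obtains a stable self-orbit-equivalence of $\Gamma\actson X/i(K)$, and uses cocycle superrigidity together with \cite[Lemma~5.10]{PV08b} to force this SOE to be a genuine conjugacy (compression constant~$1$). Crucially, this does \emph{not} yet give $\tau(p)=1$: one must still untwist the partial isometries $\omega_g=\alpha(u_{\delta(g)})u_g^*\tilde{p}$, which form a generalized $1$-cocycle with values in $\rL^\infty(X)\rtimes\mathbb{Z}^n$ rather than in a Polish group. The paper handles this with a separate twisted cocycle superrigidity theorem (Theorem~\ref{thm.cocycle}) and an ergodicity argument on $\mcL\mathbb{Z}^n\cong\rL^\infty(\mathbb{T}^n)$. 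Your sketch skips all of this.

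For $\mathcal{F}(\mathcal{R}_2)$, your description of the acting group is off: $\widehat{\mathbb{Z}_p^n}$ is the Pr\"ufer group $(\mathbb{Z}[1/p]/\mathbb{Z})^n$, not $\mathbb{Z}_p^n$, and certainly not the finite group $(1/p)\mathbb{Z}^n/\mathbb{Z}^n$. Your restriction to $[0,1/p)^n$ kills only the $p$-torsion subgroup, and since the Pr\"ufer group modulo its $p$-torsion is again the Pr\"ufer group, the reduction yields $\mathcal{R}_2^{1/p^n}\cong\mathcal{R}_2$ (which does give $p^n\in\mathcal{F}(\mathcal{R}_2)$, so the inclusion $\supseteq$ survives), not the $\Gamma$-only relation $\mathcal{R}'$ you describe. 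For the hard inclusion $\subseteq$, the paper takes a route quite different from your ``classification of $\SLn$-equivariant self-covers of $\mathbb{T}^n$'': it realizes $\mathcal{R}_2$ as the quotient $\tilde{G}/N\actson\tilde{X}/N$ of the infinite-measure action $\tilde{G}=(\mathbb{Z}[1/p]^n\times\mathbb{Z}_p^n)\rtimes\Gamma\actson\tilde{X}=\mathbb{R}^n\times(\mathbb{Z}_p^n)^\Gamma$ with $N=\mathbb{Z}^n\times\mathbb{Z}_p^n$, proves $\mathcal{U}_{\text{fin}}$-cocycle superrigidity for $\tilde{G}\actson\tilde{X}$ via s-malleability and property~(T), and then invokes \cite[Lemma~5.10]{PV08b} to reduce any self-SOE to a choice of open normal $N_1\lhd\tilde{G}$ acting properly. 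A classification of $\SLn$-invariant subgroups of $\mathbb{Z}[1/p]^n$ (Lemma~\ref{subgroups}) then pins down $N_1$ up to commensurability with $N$, and the compression constant is the index ratio $[N_1:N\cap N_1]/[N:N\cap N_1]$, which is forced to be a power of $p^n$. Your proposed direct argument on $\mathbb{T}^n$ does not supply the rigidity input needed to rule out other scalings; the passage to the universal cover $\mathbb{R}^n$ is what makes the malleability and property~(T) machinery of \cite{PV08b} applicable.
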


This theorem follows from Theorem \ref{triv}, Theorem \ref{twocart} and Theorem \ref{nontriv}.

\section{Preliminaries}

\subsection{Group actions on measure spaces}

\begin{definition}{\cite[\S 1.5]{Po03}}
Let $G$ be a locally compact second countable group and $G \curvearrowright (X,\mu)$ a Borel
action preserving the finite or infinite measure $\mu$. The action is called \textit{s-malleable} if there exists
\begin{itemize}
\item a one-parameter group $(\alpha_t)_{t\in \mathbb{R}}$ of measure preserving transformations of $X \times X$,
\item an involutive measure preserving transformation $\beta$ of $X \times X$,
\end{itemize}
such that
\begin{itemize}
\item $\alpha_t$ and $\beta$ commute with the diagonal action $G \curvearrowright X \times X$,
\item $\alpha_1(x,y) \in \{y\} \times X$ for almost all $(x,y) \in X \times X$,
\item $\beta(x,y) \in \{x\} \times X$ for almost all $(x,y) \in X \times X$,
\item $\alpha_t \circ \beta = \beta \circ \alpha_{-t}$ for all $t \in \mathbb{R}$.
\end{itemize}
\end{definition}

\begin{definition}
A non-singular action $G \curvearrowright (X,\mu)$ of a locally compact second countable group $G$ on a standard measure space $(X,\mu)$ is called \textit{essentially free and proper} if there exists a measurable map $\pi: X \to G$ such that $\pi(g\cdot x)= g\pi(x)$ for almost all $(g,x) \in G \times X$.
\end{definition}

\begin{definition}\label{SOE}
Let $\Gamma \stackrel{\alpha}{\curvearrowright} (X,\mu)$ and $\Lambda \stackrel{\beta}{\curvearrowright} (Y,\nu)$ be essentially free, ergodic, non-singular actions of countable groups on standard measure spaces.
\begin{itemize}
\item Let $\delta: \Gamma \rightarrow \Lambda$ be a group isomorphism. A $\delta$-conjugacy of the actions $\alpha$ and $\beta$ is a non-singular isomorphism $\Delta: X \rightarrow Y$ such that $\Delta(g \cdot x) = \delta(g) \cdot \Delta(x)$ for all $g \in \Gamma$ and a.e. $x \in X$.
\item A \textit{stable orbit equivalence} between the actions $\alpha$ and $\beta$ is a non-singular isomorphism $\Delta : X_0 \to Y_0$ between non-negligible subsets $X_0\subset X, Y_0\subset Y$, such that $\Delta$ is an isomorphism between the restricted orbit equivalence relations $\mathcal{R}(\Gamma \curvearrowright X)_{\mid X_0}$ and $\mathcal{R}(\Lambda \curvearrowright Y)_{\mid Y_0}$. The compression constant of $\Delta$ is defined as $c(\Delta) := \frac{\nu(Y_0)}{\mu(X_0)}$.
\end{itemize}
\end{definition}

\begin{remark}\label{remark-extend-SOE}
Let $\Gamma \stackrel{\alpha}{\curvearrowright} (X,\mu)$ and $\Lambda \stackrel{\beta}{\curvearrowright} (Y,\nu)$ be essentially free, ergodic, probability measure preserving actions of countable groups on standard measure spaces.
By definition, a stable orbit equivalence between $\alpha$ and $\beta$ is a measure space isomorphism $\Delta : X_0 \recht Y_0$ between non-negligible subsets $X_0 \subset X, Y_0 \subset Y$, satisfying
$$\Delta ( \Gamma \cdot x \cap X_0) = \Lambda \cdot \Delta(x) \cap Y_0$$
for a.e.\ $x \in X_0$. By ergodicity of $\Gamma \actson X$, we can choose a measurable map $\Theta : X \recht X_0$ satisfying $\Theta(x) \in \Gamma \cdot x$ for a.e.\ $x \in X$. Denote $\Delta_0 := \Delta \circ \Theta$. By construction $\Delta_0$ is a local isomorphism from $X$ to $Y$. This means that $\Delta_0 : X \recht Y$ is a Borel map and that $X$ can be partitioned into a sequence of non-negligible subsets $\cW \subset X$, such that the restriction of $\Delta_0$ to any of these subsets $\cW$ is a measure space isomorphism of $\cW$ onto some non-negligible subset of $Y$. Also by construction $\Delta_0$ is orbit preserving, meaning that for a.e.\ $x,y \in X$ we have that $x \in \Gamma \cdot y$ iff $\Delta_0(x) \in \Lambda \cdot \Delta_0(y)$.
\end{remark}

\begin{definition}
Let $\Gamma \stackrel{\alpha}{\curvearrowright} (X,\mu)$ be an essentially free, ergodic, non-singular action of a countable group on a standard measure space. We say that $\alpha$ is \textit{induced} from $\Gamma_1 \curvearrowright X_1$, if $\Gamma_1$ is a subgroup of $\Gamma$, $X_1$ is a non-negligible subset of $X$ and $g \cdot X_1 \cap X_1$ is negligible for all $g \in \Gamma - \Gamma_1$.
\end{definition}

\subsection{Fundamental groups}

The \textit{fundamental group} $\mathcal{F}(M)$ of a II$_1$ factor $(M,\tau)$, introduced in \cite{MvN43}, is defined as the following subgroup of $\mathbb{R}^*_+$.
$$\mathcal{F}(M)=\left\{\frac{\tau(p)}{\tau(q)} \;\middle\vert\; p,q \mbox{ are non-zero projections in }M\mbox{ such that }pMp\cong qMq\right\}.$$
An ergodic probability measure preserving measurable equivalence relation with countable equivalence classes on a standard probability space $(X,\mu)$ is called a II$_1$ equivalence relation. The fundamental group $\mathcal{F}(\mathcal{R})$ of a II$_1$ equivalence relation $\mathcal{R}$ is defined as
$$\mathcal{F}(\mathcal{R})=\left\{\frac{\mu(Y)}{\mu(Z)}\;\middle\vert\; \mathcal{R}_{\mid Y} \cong \mathcal{R}_{\mid Z}\right\}.$$

\subsection{Different types of mixing}

\begin{definition}{\cite[\S 2]{Sch84}} \label{def-sch}
Let $G$ be a countable group, $(X,\mu)$ a standard probability space and $G \curvearrowright X$ a probability measure preserving, ergodic action. We say that $G \curvearrowright X$ is \begin{enumerate}
\item \textit{weakly mixing} if for every $\epsilon > 0$, every $n \in \mathbb{N}$ and all measurable $A_1,\ldots,A_n \subset X$ there is a $g \in G$ such that for all $i , j \in \{1,\ldots,n\}$ we have
$$|\mu(A_i \cap gA_j)-\mu(A_i)\mu(A_j)|<\epsilon \; .$$
\item \textit{mildly mixing} if for every measurable set $B \subset X$ with $0 < \mu(B) < 1$,
    $$\liminf_{g \to \infty} \mu(B \Delta gB)>0 \;, $$
\item \textit{strongly mixing} if for all measurable sets $A,B \subset X$ we have $$\lim_{g \to \infty} \mu(A \cap gB)= \mu(A)\mu(B) \;.$$
\end{enumerate}
\end{definition}
Examples of actions satisfying the strongest of these conditions include all Bernoulli actions of countable groups.
\begin{proposition}{\cite[Proposition 2.3]{Sch84}}\label{mildly}
Let $G \curvearrowright (X,\mu)$ be a measure preserving, ergodic action of a countable group on a standard probability space. Then $G \curvearrowright (X,\mu)$ is mildly mixing if and only if for every non-singular, properly ergodic (i.e. every $G$-orbit has measure zero) action $G \curvearrowright (Y,\nu)$ on a $\sigma$-finite standard measure space, the product action $G \curvearrowright (X \times Y, \mu \times \nu)$ is ergodic.
\end{proposition}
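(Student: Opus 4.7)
The strategy is to prove the ``only if'' direction directly and the ``if'' direction contrapositively. Assume first that $G \actson X$ is mildly mixing and that $G \actson (Y,\nu)$ is non-singular, $\sigma$-finite, and properly ergodic. Let $B \subset X \times Y$ be $G$-invariant; the goal is to show that $B$ is null or co-null. Disintegrating along $Y$ produces $B_y := \{x \in X : (x,y) \in B\}$ for a.e.\ $y$, with $B_{gy} = g \cdot B_y$ modulo null sets. Since $\mu$ is $G$-invariant and $G \actson Y$ is ergodic, $y \mapsto \mu(B_y)$ is a.e.\ equal to a constant $c \in [0,1]$; the extreme cases yield $B$ null or co-null by Fubini, so assume $0 < c < 1$ for contradiction.

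The equivariant Borel map $\Phi : Y \to L^1(X,\mu)$, $y \mapsto 1_{B_y}$, has separable range. Using $\sigma$-finiteness of $\nu$ to reduce to a finite-measure piece of $Y$, covering the image of $\Phi$ by countably many $\eps/2$-balls, and picking a ball with positive preimage mass, I obtain $y_0 \in Y$ and a set $V \subset Y$ with $0 < \nu(V) < \infty$ such that $\mu(B_y \triangle B_{y_0}) < \eps$ for every $y \in V$. The crux of the argument is then to find an unbounded sequence $g_n$ in $G$ and some $y \in V$ with $g_n y \in V$ for all $n$. For any such choice, $G$-invariance of $\mu$ and the relation $B_{g_n y} = g_n B_y$ give
$$\mu(g_n B_{y_0} \triangle B_{y_0}) \le \mu(B_{y_0} \triangle B_y) + \mu(B_{g_n y} \triangle B_{y_0}) < 2\eps.$$
Letting $\eps \to 0$ (adjusting $y_0$ and $V$ each time), we obtain $\liminf_{g \to \infty} \mu(g B_{y_0} \triangle B_{y_0}) = 0$, contradicting mild mixing since $\mu(B_{y_0}) = c \in (0,1)$.

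The main obstacle is the recurrence statement: in a properly ergodic, $\sigma$-finite, non-singular ergodic action, every set $V$ of positive finite measure has infinite return times $\nu$-a.e. The return cardinality $y \mapsto |\{g \in G : gy \in V\}|$ is $G$-invariant, hence a.e.\ constant by ergodicity; if that constant were finite, a transversal/counting argument using non-singularity would force individual $G$-orbits to have positive $\nu$-measure, contradicting proper ergodicity.

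For the ``if'' direction, suppose contrapositively that $G \actson X$ is not mildly mixing: there exist $B \subset X$ with $0 < \mu(B) < 1$ and a sequence $g_n \to \infty$ in $G$ with $\mu(g_n B \triangle B) \to 0$. I would construct a non-singular, properly ergodic, $\sigma$-finite $G$-space $(Y,\nu)$ whose product with $X$ fails to be ergodic by taking $Y$ to be the Mackey range of a suitable cocycle attached to the rigidity sequence $(g_n)$; the rigidity of $1_B$ along $(g_n)$ then lifts to a non-trivial $G$-invariant subset of $X \times Y$. This construction is exactly the one in \cite{Sch84}, which I would cite rather than reproduce.
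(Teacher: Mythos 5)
The paper does not prove this proposition; it cites it directly from \cite{Sch84}, so your reconstruction is being measured against Schmidt's original argument rather than an in-text proof. Your ``only if'' direction has the right skeleton (disintegrate along $Y$, use separability of $L^1(X)$ to localize, use recurrence coming from proper ergodicity, deduce rigidity of a slice $B_y$), but the final step does not quite close as written. For each $\eps$ you produce a set $B_{y_0^{(\eps)}}$ of measure $c$ and an unbounded sequence $g_n$ with $\mu(g_n B_{y_0^{(\eps)}} \triangle B_{y_0^{(\eps)}}) < 2\eps$, but since $y_0$ (hence the base set) changes with $\eps$, this does not yet contradict mild mixing for any single set. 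The repair is to fix $y_0$ once and for all in the essential range of $\Phi$, i.e.\ choose $y_0$ so that $V_\eps := \{\, y : \mu(B_y \triangle B_{y_0}) < \eps \,\}$ has positive $\nu$-measure for every $\eps > 0$; such $y_0$ form a $\nu$-conull set by separability of $L^1(X)$. Then run your recurrence argument inside each $V_\eps$ with the same $y_0$ to get $\liminf_{g\to\infty}\mu(gB_{y_0}\triangle B_{y_0}) = 0$ directly. Your recurrence claim (conservativity of a non-singular, ergodic, properly ergodic action on a $\sigma$-finite standard space) is correct and your transversal sketch is the right idea; one small refinement is to count orbit points $|V \cap G\cdot y|$ rather than group elements $|\{g : gy \in V\}|$ since the action on $Y$ is not assumed free, though finiteness of the latter implies finiteness of the former, so nothing breaks.

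The ``if'' direction is not a proof: you name the Mackey-range construction and then defer to \cite{Sch84}, which is the very source the statement is attributed to, so this is circular as a blind attempt. That said, the paper only ever invokes the ``mildly mixing $\Rightarrow$ product with a properly ergodic action is ergodic'' implication, which is precisely the direction you do argue, so for the purposes of this paper the omission is harmless; a self-contained proof of the stated equivalence, however, would require at least a sketch of how a rigidity sequence $(g_n)$ and a rigid set $B$ give rise to a non-singular properly ergodic $\sigma$-finite $G$-space $(Y,\nu)$ and a non-trivial $G$-invariant subset of $X \times Y$.
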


If we restrict $G \curvearrowright (Y,\nu)$ to probability measure preserving actions on standard probability spaces, Proposition \ref{mildly} gives a characterization of weakly mixing actions. It is now clear that $3 \Rightarrow 2 \Rightarrow 1$ in Definition \ref{def-sch}.

\subsection{Cocycle superrigidity}

\begin{definition}{\cite[Definition 2.5]{Po05}}
 A Polish group is \textit{of finite type} if it can be realized as a closed subgroup of the unitary
group of some II$_1$ factor with separable predual.
\end{definition}
All countable and all second countable compact
groups are Polish groups of finite type.

\begin{definition}{\cite[Definition 2.5]{Po05}}
A non-singular action $G \curvearrowright (X,\mu)$ of a locally compact second countable
group $G$ on a standard measure space $(X,\mu)$ is called \textit{$\mathcal{U}_{fin}$-cocycle superrigid} if every 1-cocycle
for the action $G \curvearrowright (X,\mu)$ with values in a Polish group of finite type $\mathcal{G}$ is cohomologous to a continuous group morphism $G \to \mathcal{G}.$
\end{definition}

The following is a slightly different version of \cite[Proposition 3.6 (2)]{Po05} (see also \cite[Lemma 3.5]{Fu06}).

\begin{lemma}\label{cocycle}
Let $G \curvearrowright (X,\mu)$ be a non-singular action of a countable group $G$ on a standard measure space $(X,\mu)$. Let $ \omega: G \times X \to \mathcal{G}$ be a 1-cocycle with values in the Polish group $\mathcal{G}$ with a bi-invariant metric. Let $H < G$ be a subgroup and assume that $\omega(h,x)=\delta(h)$ for all $h \in H$ and a.e. $x \in X$, where $\delta: H \to \mathcal{G}$ is a group morphism. For any $g_0 \in G$ such that the diagonal action $H_0 = H \cap g_0^{-1}Hg_0 \curvearrowright X \times X$ is ergodic, $x \mapsto \omega(g_0,x)$ is essentially constant.
\end{lemma}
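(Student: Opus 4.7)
The approach is to derive an equivariance relation for $\omega(g_0, \cdot)$ from the cocycle identity, convert it to an $H_0$-invariant real-valued function via the bi-invariant metric, apply ergodicity, and then use separability of $\mathcal{G}$ to force the resulting constant distance to be zero.

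First I would expand $\omega(g_0 h, x)$ for $h \in H_0$ in two different ways using the cocycle identity: as $\omega(g_0, h \cdot x)\, \omega(h, x)$, and, via $g_0 h = (g_0 h g_0^{-1}) g_0$, as $\omega(g_0 h g_0^{-1}, g_0 \cdot x)\, \omega(g_0, x)$. Since $g_0 h g_0^{-1} \in H$ by the definition of $H_0$, the hypothesis $\omega|_{H\times X} = \delta$ applies on both sides and yields
\[
\omega(g_0, h \cdot x) = \delta(g_0 h g_0^{-1})\, \omega(g_0, x)\, \delta(h)^{-1}
\]
for a.e.\ $x \in X$. Setting $\phi(x) := \omega(g_0, x)$ and $F(x, y) := \phi(x)^{-1} \phi(y)$, a direct computation gives $F(h \cdot x, h \cdot y) = \delta(h)\, F(x, y)\, \delta(h)^{-1}$. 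Because $d$ is bi-invariant — so conjugation is an isometry of $\mathcal{G}$ — the real-valued function $(x, y) \mapsto d(\phi(x), \phi(y))$ is $H_0$-invariant on $X \times X$. Ergodicity of the diagonal $H_0$-action then produces a constant $c \geq 0$ with $d(\phi(x), \phi(y)) = c$ for a.e.\ $(x, y)$.

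The main obstacle is to show $c = 0$. Let $\nu := \phi_* \mu$ on $\mathcal{G}$, so $\nu \times \nu$ is concentrated on $\{(g, g') : d(g, g') = c\}$. An atom $g^*$ of $\nu$ would put $(g^*, g^*)$ in this set with positive mass, forcing $c = d(g^*, g^*) = 0$; so either we are done, or $\nu$ is atomless. Assuming $c > 0$ and $\nu$ atomless, Fubini gives that the set $G := \{h : \nu(B(h, c)) = 0\}$ has $\nu$-measure $1$. Now I would cover the separable Polish space $\mathcal{G}$ by countably many open balls $B(x_n, c/4)$; if some ball had $\nu(B(x_n, c/4)) > 0$, a $\nu$-generic point $h \in B(x_n, c/4) \cap G$ would satisfy $B(x_n, c/4) \subset B(h, c/2) \subset B(h, c)$ by the triangle inequality, forcing $\nu(B(x_n, c/4)) \leq \nu(B(h, c)) = 0$, a contradiction. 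Hence $\nu(B(x_n, c/4)) = 0$ for every $n$, contradicting $\nu(\mathcal{G}) = 1$. Therefore $c = 0$, so $\phi(x) = \phi(y)$ a.e., and $\omega(g_0, \cdot)$ is essentially constant.
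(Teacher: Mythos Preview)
Your proof is correct and follows the paper's approach exactly: derive the equivariance $\omega(g_0, h\cdot x) = \delta(g_0 h g_0^{-1})\,\omega(g_0,x)\,\delta(h)^{-1}$, use bi-invariance of $d$ to make $(x,y)\mapsto d(\omega(g_0,x),\omega(g_0,y))$ an $H_0$-invariant real-valued function on $X\times X$, and apply ergodicity. The only difference is that the paper simply writes ``hence $0$'' for the constant, whereas you spell out a separability argument; yours is correct, though a shorter route is to cover $\mathcal{G}$ by countably many open balls of radius $c/3$ and note that any ball of positive $\phi_*\mu$-measure already yields pairs at distance less than $c$.
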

\begin{proof}
For any $h \in H_0$ denote $\alpha(h) = g_0 h g_0^{-1}$ and remark that $h, \alpha(h) \in H$. For all $h \in H_0$ and a.e. $x \in X$ we have
\begin{eqnarray*}
\delta(\alpha(h))\omega(g_0,x) & = & \omega(\alpha(h),g_0\cdot x)\omega(g_0,x)\\
& = & \omega(\alpha(h)g_0,x)\\
& = & \omega(g_0h,x)\\
& = & \omega(g_0,h\cdot x)\omega(h,x)\\
& = & \omega(g_0,h\cdot x)\delta(h) \;,
\end{eqnarray*}
so $\omega(g_0,h\cdot x) = \delta(\alpha(h))\omega(g_0,x)\delta(h^{-1})$. Now consider the map $\varphi:X \times X \to \mathbb{R}: (x,y) \mapsto d(\omega(g_0,x), \omega(g_0,y))$. Then $\varphi$ is essentially invariant under the diagonal action $H_0 \curvearrowright X \times X$, as the metric is bi-invariant. But as $H_0 \curvearrowright X \times X$ is ergodic, $\varphi$ has to be essentially constant, hence 0. This proves that $x \mapsto \omega(g_0,x)$ is essentially constant.
\end{proof}

\section{The fundamental group of $M$ is trivial}

\begin{theorem}\label{triv}
With $G \curvearrowright X$ and $M = \rL^\infty(X) \rtimes G$ as in Notation \ref{notation}, we have $\mathcal{F}(M)=\{1\}$. Hence also $\mathcal{F}(\mathcal{R}(G \curvearrowright X))=\{1\}.$
\end{theorem}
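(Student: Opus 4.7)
\emph{Proof plan.} The plan is to assume there is a $*$-isomorphism $\theta : pMp \to M$ for some non-zero projection $p \in M$ with $\tau(p) = t$ and to deduce $t = 1$. The starting observation is that the free product presentation $\Gamma = \SLn \ast_\Sigma (\Sigma \times \Lambda)$ induces an amalgamated free product decomposition
$$M \;=\; M_1 \ast_N M_2 \, ,$$
with $M_1 = \rL^\infty(X) \rtimes (\Z^n \rtimes \SLn)$, $M_2 = \rL^\infty(X) \rtimes (\Z^n \rtimes (\Sigma \times \Lambda))$, and $N = \rL^\infty(X) \rtimes (\Z^n \rtimes \Sigma)$. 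My first step would be to apply Ioana's classification of Cartan subalgebras in AFP II$_1$ factors from \cite{Io12} to the image Cartan $\theta(A_1 p) \subset M$, concluding that $\theta(A_1 p)$ is unitarily conjugate inside $M$ either to $A_1$ or to the second Cartan $A_2$ of Theorem~\ref{twocart}.

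I would then split into the two cases. If $\theta(A_1 p)$ is unitarily conjugate to $A_2$, then via the Feldman--Moore correspondence \cite{FM75} the isomorphism $\theta$ descends to an isomorphism of twisted equivalence relations $(\cR_1^t, 1) \cong (\cR_2, \om_2)$; such an isomorphism forces the cohomology class of $\om_2$ to be trivial in $H^2(\cR_2, \T)$. This will be ruled out by the fact (implicit in Theorem~\ref{twocart} and necessary for statement~(2) of the main theorem to be compatible with statement~(1)) that the Cartan $A_2 \subset M$ is genuinely twisted. In the remaining case where $\theta(A_1 p)$ is unitarily conjugate to $A_1$, the Cartan-preserving isomorphism $\theta$ gives $t \in \cF(\cR_1)$, and it thus suffices to prove $\cF(\cR_1) = \{1\}$.

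For the latter I would invoke cocycle superrigidity. Since $\Gamma$ contains the property (T) group $\SLn$ (for $n \geq 6$) as a sufficiently normal subgroup of the amalgamated free product, Popa's $\cU_{fin}$-cocycle superrigidity theorem applies to the Bernoulli action $\Gamma \acts (\Z_p^n)^\Gamma$. Combining this with Lemma~\ref{cocycle} and the strong mixing of the Bernoulli shift, one extends the conclusion to $G = \Z^n \rtimes \Gamma$: every 1-cocycle for $G \acts X$ with values in a Polish group of finite type is cohomologous to a continuous group morphism. Applied to the Zimmer-type scaling cocycle attached to a self stable orbit equivalence of $G \acts X$ with compression constant $t$, this forces $t = 1$ because $G$ has no relevant non-trivial morphism into $\R^*_+$. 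The main obstacle I anticipate is precisely this last extension of cocycle superrigidity from $\Gamma$ to $G$: the $\Z^n$-translations are not Bernoulli and are not directly covered by Popa's theorem, so pinning down the cocycle on $\Z^n$ requires a careful application of Lemma~\ref{cocycle} to diagonally ergodic intersections such as $\Gamma \cap g_0^{-1} \Gamma g_0$ for $g_0 \in \Z^n$.
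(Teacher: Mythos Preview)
Your plan has two gaps that prevent it from closing.

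\textbf{The Cartan dichotomy is unjustified.} Ioana's results in \cite{Io12} do not classify the Cartan subalgebras of $M$ up to unitary conjugacy; at best they show that any Cartan (more generally, any amenable subalgebra whose normalizer generates $M$) intertwines into the amenable core $B = \rL^\infty(X) \rtimes \Z^n$. Nothing in this paper or in \cite{Io12} proves that $A_1$ and $A_2$ exhaust the Cartan subalgebras of $M$ --- the main theorem only asserts there are \emph{at least} two. So you cannot split into the cases ``conjugate to $A_1$'' and ``conjugate to $A_2$''.

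\textbf{The $A_2$ case elimination is circular.} You want to rule out $\theta(A_1 p) \sim A_2$ by invoking the non-triviality of the $2$-cocycle $\om_2$ on $\cR_2$. But that non-triviality is \emph{derived} in this paper from the conjunction of $\cF(M)=\{1\}$ and $\cF(\cR_2)\neq\{1\}$; it is not proven independently. Using it here assumes what you are trying to prove.

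The paper avoids both problems by never attempting to pin down a Cartan subalgebra. Instead it applies \cite[Theorem 7.1]{Io12} to the amenable algebra $B$ itself in the decomposition $M = B \rtimes \Gamma$ (Lemma~\ref{lemma.cornerB}), obtaining $\alpha(B) \prec_M B$ both ways. Passing to the center $\mathcal{Z}(B) = \rL^\infty(X/i(K))$ yields a self stable orbit equivalence of $\Gamma \actson X/i(K)$. Cocycle superrigidity of the \emph{locally compact} action $\Z_p^n \rtimes \Gamma \actson X$ (Lemma~\ref{cocyclesuperrigid}) combined with \cite[Lemma 5.10]{PV08b} forces this SOE to have compression constant $1$. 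But this does not yet yield $\tau(p)=1$: after straightening the $\mathcal{Z}(B)$-level, there remains a generalized $1$-cocycle $(\om_g)_{g\in\Gamma}$ for the action of $\Gamma$ on $B$, and a separate twisted cocycle superrigidity theorem (Theorem~\ref{thm.cocycle}) is required to untwist it into $(\mcL\Z^n)^\infty$ and finish via an ergodicity argument on $\T^n$. That last ingredient is substantial and has no counterpart in your plan.

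Your sketch for $\cF(\cR_1)=\{1\}$, via $\cU_{fin}$-cocycle superrigidity of $G \actson X$ extended from $\Gamma$ to $\Z^n \rtimes \Gamma$ using Lemma~\ref{cocycle}, is in fact workable on its own: for each nonzero $z\in\Z^n$ the intersection $\Gamma \cap z^{-1}\Gamma z = \mathrm{Stab}_\Gamma(z)$ is infinite and acts mixingly on $X$, so the hypothesis of Lemma~\ref{cocycle} is met. But note that in the paper $\cF(\cR_1)=\{1\}$ is a trivial \emph{consequence} of $\cF(M)=\{1\}$ (since $\cF(\cR_1)\subset\cF(M)$ for the untwisted Cartan), not a stepping stone toward it; reversing that implication is exactly where your argument breaks.
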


We first prove two general lemmas.

\begin{definition}
Let $p$ be a projection in a von Neumann algebra $M$. Denote by $\mathcal{Z}(M)$ the center of $M$ and by $z_M(p)$ the central support of $p$ in $M$. We define the $*$-isomorphism
$$\vphi_p: \mathcal{Z}(M) z_M(p) \rightarrow \mathcal{Z}(pMp) \quad\text{by}\;\; \vphi_p(m) = mp \; .$$
\end{definition}

\begin{lemma}\label{lemma.cornerB}
Let $K$ be a compact abelian group with countable dense subgroup $Z$. Let $\Gamma_1$ and $\Gamma_2$ be two countable groups with a common subgroup $\Sigma$ such that $[\Gamma_1 : \Sigma] \geq 2$ and $[\Gamma_2 : \Sigma] \geq 3$. Denote $\Gamma = \Gamma_1 \underset{\Sigma}{*} \Gamma_2$ and suppose that there exist $g_1, g_2, \ldots, g_n \in \Gamma$ such that $\cap_{i=1}^n g_i \Sigma g_i^{-1}$ is finite. Assume that $\Gamma$ acts on $K$ by continuous group automorphisms $(\alpha_g)_{g \in \Gamma}$ preserving $Z$. Embed $K$ in $K^\Gamma$ by $i: K \to K^\Gamma: k \mapsto (\alpha_g(k))_{g \in \Gamma}$. Write $M := \rL^\infty(K^\Gamma) \rtimes (Z \rtimes \Gamma)$ where $Z$ acts on $K^\Gamma$ by translation after embedding by $i$ and $\Gamma$ acts by Bernoulli shift.

Let $p$ be a projection in $M$ and let $\alpha: M \rightarrow pMp$ be a stable automorphism of $M$. Denote $B := \rL^\infty(K^\Gamma) \rtimes Z$. Remark that $\mathcal{Z}(B) = \rL^\infty(K^\Gamma/i(K))$.
\begin{enumerate}
\item There exist projections $q,r \in B$ such that after composition of $\alpha$ with an inner automorphism of $M$, we have $\alpha(q) = r$ and $\alpha(qBq)=rBr$.
\item Furthermore, the $*$-isomorphism $\Psi: \mathcal{Z}(B)z_B(q) \rightarrow \mathcal{Z}(B)z_B(r)$ given by the composition of
\begin{equation*}
\begin{tabular}{ccccccc}
$\mathcal{Z}(B)z_B(q)$ & $\overset{\vphi_q}{\rightarrow}$ & $\mathcal{Z}(q B q)$ & $\overset{\alpha}{\rightarrow}$ & $\mathcal{Z}(r B r)$ & $\overset{\vphi_r^{-1}}{\rightarrow}$ & $\mathcal{Z}(B) z_B(r)$
\end{tabular}
\end{equation*}
is a stable orbit equivalence between the action $\Gamma \actson K^\Gamma/i(K)$ and itself.
\end{enumerate}
\end{lemma}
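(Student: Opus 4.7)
My plan is to combine Popa's intertwining-by-bimodules with the Cartan-subalgebra theorem for amalgamated free product II$_1$ factors of \cite{Io12}. First I would realise $M$ as an AFP
\[
M = M_1 \ast_N M_2, \qquad M_j := \rL^\infty(K^\Gamma) \rtimes (Z \rtimes \Gamma_j), \quad N := \rL^\infty(K^\Gamma) \rtimes (Z \rtimes \Sigma),
\]
coming from $\Gamma = \Gamma_1 \ast_\Sigma \Gamma_2$. Note that $B \subset N$, and that the hypothesis that $\bigcap_i g_i \Sigma g_i^{-1}$ is finite is precisely the near-malnormality condition needed to feed into Ioana's theorem for this AFP.

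For part 1, I would first establish $\alpha(B) \prec_M B$. The algebra $\rL^\infty(K^\Gamma)$ is a Cartan subalgebra of $M$, so $\alpha(\rL^\infty(K^\Gamma)) \subset pMp$ is a Cartan of the corner. Applying the Cartan uniqueness theorem from \cite{Io12} (the Bernoulli-like mixing of $\Gamma \actson K^\Gamma$ together with the finiteness condition on $\bigcap g_i \Sigma g_i^{-1}$ supply the mixing-over-amalgam hypothesis), $\alpha(\rL^\infty(K^\Gamma))$ is unitarily conjugate in $pMp$ to $\rL^\infty(K^\Gamma)p$. Composing $\alpha$ with the corresponding inner automorphism, we may assume $\alpha(\rL^\infty(K^\Gamma)) = \rL^\infty(K^\Gamma) p$. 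Since the canonical unitaries $u_z$ for $z \in Z$ normalise $\rL^\infty(K^\Gamma)$ in $M$, their images under $\alpha$ normalise $\rL^\infty(K^\Gamma) p$ inside $pMp$. Comparing normalisers yields $\alpha(B) \prec_M B$.

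Next I would upgrade this intertwining to equality of corners. The subalgebra $B$ is regular in $M$ (its normaliser contains the unitaries $u_g$ for $g \in \Gamma$, which together with $B$ generate $M$) and has relative commutant $B' \cap M = \mathcal{Z}(B)$, since $Z \rtimes \Gamma$ acts freely on $K^\Gamma$. Under these conditions, Popa's standard upgrade of an intertwining to a unitary conjugacy of corners (see \cite{Po03}) produces non-zero projections $q, r \in B$ and a unitary $u$ such that, after composing $\alpha$ with $\Ad(u)$, we have $\alpha(q) = r$ and $\alpha(qBq) = rBr$, proving part 1.

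For part 2, the $\ast$-isomorphism $\Psi$ is automatically induced by the corner isomorphism of part 1. To check it is a stable orbit equivalence for $\Gamma \actson K^\Gamma/i(K)$ with itself, I would characterise this orbit equivalence relation intrinsically from the pair $(\mathcal{Z}(B) \subset M)$: the quasi-normaliser of $\mathcal{Z}(B)$ modulo $B$ recovers the $\Gamma$-action on the quotient $K^\Gamma/i(K)$ and hence its orbit equivalence relation. Since $\alpha$ is an isomorphism of $M$ that maps $qBq$ onto $rBr$, it sends the quasi-normaliser of $\mathcal{Z}(B)z_B(q)$ onto that of $\mathcal{Z}(B)z_B(r)$. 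Therefore $\Psi$ preserves orbits and is a stable orbit equivalence, with compression constant $\tau(z_B(r))/\tau(z_B(q))$.

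The main obstacle will be step one, namely the application of \cite{Io12}. Verifying that the amalgam $N$ satisfies the mixing hypotheses of Ioana's theorem---in particular, that the finiteness of $\bigcap_i g_i \Sigma g_i^{-1}$ translates into the required near-malnormality of $N$ inside $M$---is the delicate part. Once that conjugacy is in hand, the promotion from intertwining to equality of corners and the intrinsic description of the orbit equivalence relation follow relatively standard technology from Popa's deformation-rigidity theory.
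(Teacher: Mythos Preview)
Your plan has a genuine gap at the very first step. You invoke a ``Cartan uniqueness theorem'' from \cite{Io12} to conclude that $\alpha(\rL^\infty(K^\Gamma))$ is unitarily conjugate to $\rL^\infty(K^\Gamma)p$. But no such uniqueness holds here: the entire point of this paper is that $M$ has a second Cartan subalgebra $A_2 = \rL^\infty(K^\Gamma/i(K)) \ovt \mcL Z$ that is \emph{not} conjugate to $\rL^\infty(K^\Gamma)$. What \cite[Theorem 7.1]{Io12} actually gives in this AFP setting is an \emph{intertwining} statement for amenable subalgebras, not unitary conjugacy of Cartans. Both $\rL^\infty(K^\Gamma)$ and $A_2$ sit inside $B$, so both trivially embed into $B$, yet they are not conjugate; hence knowing $\alpha(\rL^\infty(K^\Gamma)) \prec_M B$ tells you nothing about conjugacy with the specific Cartan $\rL^\infty(K^\Gamma)$. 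Your subsequent normaliser argument also breaks down: the normaliser of a Cartan is the whole factor, so the fact that $\alpha(u_z)$ normalises $\rL^\infty(K^\Gamma)p$ carries no information.

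The paper avoids this trap by applying \cite[Theorem 7.1]{Io12} directly to the amenable algebra $B$ rather than to the Cartan $\rL^\infty(K^\Gamma)$, obtaining the two-sided intertwining $\alpha(B) \prec_M B$ and $B \prec_M \alpha(B)$. It then passes to the centers via \cite[Lemma 3.5]{Va07}, uses regularity of $\mathcal{Z}(B)$ in $M$ (and of $\alpha(\mathcal{Z}(B))$ in $pMp$) together with the two-sided embedding to produce a $\mathcal{Z}(B)$-$\alpha(\mathcal{Z}(B))$-subbimodule that is finite on \emph{both} sides, and only then upgrades to a $\ast$-isomorphism between corners. Your one-directional ``Popa upgrade'' citing \cite{Po03} is not enough: without both directions of the intertwining and the finite-index step you cannot pass from $\prec$ to equality of corners. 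For part 2 your quasi-normaliser idea is in the right spirit; the paper makes it concrete by describing the restricted orbit relation as generated by the graphs of the maps $\theta_v$ attached to partial isometries $v$ with $v B v^* = vv^* B vv^*$, and observes that $\alpha$ sends this set for $q$ bijectively to the corresponding set for $r$.
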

\begin{proof}
Denote $X = K^\Gamma$. Note that $B$ is amenable and that $M=B\rtimes \Gamma$. By \cite[Theorem 7.1]{Io12} we get that $\alpha(B) \embed{M} B$ and that $B \embed{M} \alpha(B)$. Remark that $B^\prime \cap M = \rL^\infty(X/i(K)) =  \mathcal{Z}(B)$. \cite[Lemma 3.5]{Va07} then gives
\begin{equation}\label{eq.both-embed}
\mathcal{Z}(B) \embed{M} \alpha(\mathcal{Z}(B)) \quad\text{and}\quad \alpha(\mathcal{Z}(B)) \embed{M} \mathcal{Z}(B) \; .
\end{equation}
Since $\mathcal{Z}(B)$ is regular in $M$ and $\alpha(\mathcal{Z}(B))$ is regular in $pMp$, (\ref{eq.both-embed}) implies that $\rL^2(M) p$ can be written as a direct sum of $\mathcal{Z}(B)$-$\alpha(\mathcal{Z}(B))$-bimodules with dim$(-_{\alpha(\mathcal{Z}(B))})$ finite and that $p \rL^2(M)$ can be written as a direct sum of $\alpha(\mathcal{Z}(B))$-$\mathcal{Z}(B)$-bimodules with dim$(-_{\mathcal{Z}(B)})$ finite.

So $\rL^2(M) p$ admits a non-zero $\mathcal{Z}(B)$-$\alpha(\mathcal{Z}(B))$-subbimodule with dim$(_{\mathcal{Z}(B)}-)$ finite and dim$(-_{\alpha(\mathcal{Z}(B))})$ finite. This means that there exists
\begin{itemize}
\item a projection $s \in M_n(\C) \otimes \alpha(\mathcal{Z}(B))$
\item a non-zero partial isometry $v \in (M_{1,n}(\C) \otimes M) s$
\item a unital $*$-homomorphism $\theta: \mathcal{Z}(B) \rightarrow s(M_n(\C) \otimes \alpha(\mathcal{Z}(B)))s$
\end{itemize}
such that $\theta(\mathcal{Z}(B)) \subset s(M_n(\C) \otimes \alpha(\mathcal{Z}(B)))s$ has finite index and $b v = v \theta(b)$ for all $b \in \mathcal{Z}(B)$. Manipulating $s,v$ and $\theta$, we obtain projections $z \in \mathcal{Z}(B), t \in \alpha(\mathcal{Z}(B))$, a non-zero partial isometry $v \in M t$ and a unital $*$-isomorphism $\theta: \mathcal{Z}(B)z \rightarrow \alpha(\mathcal{Z}(B))t$ such that $b v = v \theta(b)$ for all $b \in \mathcal{Z}(B)z$.

Remark that $\mathcal{Z}(B)^\prime \cap M = B$, $vv^* \in Bz$ and $v^*v \in \alpha(B)t$. One verifies that $v \alpha(\mathcal{Z}(B)) v^* = vv^* \mathcal{Z}(B) vv^*$. By taking relative commutants in $vv^* M vv^*$, it follows that $v \alpha(B) v^* = vv^* B vv^*$. Extend $v$ to a unitary $u \in M$. Define $q := \alpha^{-1}(v^*v) \in B$ and $r := vv^* \in B$. Then $(\Ad u \circ \alpha)(q) = r$ and $(\Ad u \circ \alpha)(qBq) = rBr$.

We now prove the second part of the lemma. For any projection $q \in B$ define the set $\mathcal{I}_q$ as follows:
\begin{equation*}
\mathcal{I}_q := \{v \in qMq \mid v \;\text{is a partial isometry with} \; v^* v, v v^* \in qBq \;\text{and} \; vBv^* = v v^* B v v^*\} \; .
\end{equation*}
For every $v \in \mathcal{I}_q$ there exists a unique $*$-isomorphism $\theta_v: \mathcal{Z}(B)z_B(v^* v) \rightarrow \mathcal{Z}(B)z_B(v v^*)$ satisfying
\begin{equation*}
\theta_v(b) v = v b \quad\text{for all}\quad b \in \mathcal{Z}(B)z_B(v^* v) \; .
\end{equation*}
Denote by $\mathcal{Q} \subset X/i(K)$ the support of the projection $z_B(q)$. By construction the restricted orbit equivalence relation $\cR(\Gamma \actson X/i(K))_{|\mathcal{Q}}$ is generated by the graphs of $\theta_v$ with $v \in \mathcal{I}_q$. To conclude the proof it suffices to remark that $\alpha(\mathcal{I}_q) = \mathcal{I}_r$.
\end{proof}

Recall that an infinite subgroup $H$ of a group $\Gamma$ is wq-normal in $\Gamma$ if there exists an increasing sequence $(H_n)_n$ of subgroups of $\Gamma$ with $H_0 = H, \cup_n H_n = \Gamma$ and such that for all $n$ the group $H_n$ is generated by the elements $g \in \Gamma$ with $|g H_{n-1} g^{-1} \cap H_{n-1}| = \infty$.

An inclusion of groups $H \subset \Gamma$ has the relative property (T) of Kazhdan-Margulis if any unitary representation of $\Gamma$ that almost contains the trivial representation of $\Gamma$ must contain the trivial representation
of $H$. We call such $H$ a rigid subgroup of $\Gamma$.

\begin{lemma}\label{cocyclesuperrigid}
Let $K$ be a compact group and let $\Gamma$ be a countable group. Assume that $\Gamma$ admits an infinite rigid subgroup that is wq-normal in $\Gamma$. Assume that $\Gamma$ acts on $K$ by continuous group automorphisms $(\alpha_g)_{g \in \Gamma}$. Embed $K$ in $K^\Gamma$ by $i: K \to K^\Gamma: k \mapsto (\alpha_g(k))_{g \in \Gamma}$. Then the action $K \rtimes \Gamma \actson K^\Gamma$ where $K$ acts by translation after embedding by $i$ and $\Gamma$ acts by Bernoulli shift, is $\mathcal{U}_{fin}$-cocycle superrigid.
\end{lemma}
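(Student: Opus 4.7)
The plan is to handle the compact group $K$ first (for which cocycles are automatically coboundaries) and thereby reduce to a cocycle on a quotient $\Gamma$-space, to which Popa's cocycle superrigidity can be applied.

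Let $\omega : (K\rtimes\Gamma)\times K^\Gamma \to \mathcal{G}$ be a 1-cocycle with values in a Polish group of finite type $\mathcal{G}$. Since $i$ is an embedding, the action $K\curvearrowright K^\Gamma$ through $i$ is essentially free, and as $K$ is compact, $K^\Gamma$ admits a Borel cross-section realizing an identification $K^\Gamma \cong K\times Y$ with $Y := K^\Gamma/i(K)$. A direct computation using only the $K$-cocycle identity shows that $\omega\vert_K$ is automatically a coboundary: if $f(k,y) := \omega(k,(e,y))$, then $\omega(k_0,(k,y)) = f(k_0 k,y)\,f(k,y)^{-1}$, so $\omega(k,x) = \eta(k\cdot x)\,\eta(x)^{-1}$ for a measurable $\eta : K^\Gamma \to \mathcal{G}$. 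After modifying $\omega$ by the coboundary of $\eta$ across the whole group $K\rtimes\Gamma$, I may assume $\omega(k,x) = 1$ for every $k\in K$ and a.e.\ $x$. Combining this with the defining relation $\gamma k = \alpha_\gamma(k)\gamma$ and the cocycle identity applied to $\omega(\gamma k,x)$ forces
\begin{equation*}
\omega(\gamma,\,k\cdot x) \;=\; \omega(\gamma,x) \quad\text{for every } \gamma\in\Gamma,\ k\in K,\ \text{a.e. } x,
\end{equation*}
so $\omega\vert_\Gamma$ is $K$-invariant in its spatial argument and descends to a 1-cocycle $\bar\omega : \Gamma\times Y \to \mathcal{G}$.

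I then apply Popa's $\mathcal{U}_{fin}$-cocycle superrigidity theorem to $\bar\omega$. The $\Gamma$-action on $Y$ inherits from the Bernoulli shift $\Gamma\curvearrowright K^\Gamma$ an s-malleable structure together with the mild mixing required by Popa, and $\Gamma$ still has its infinite rigid wq-normal subgroup, so Popa's theorem \cite{Po05} yields a continuous morphism $\delta:\Gamma\to\mathcal{G}$ and a measurable $\bar\xi : Y\to\mathcal{G}$ with $\bar\omega(\gamma,y) = \bar\xi(\gamma\cdot y)\,\delta(\gamma)\,\bar\xi(y)^{-1}$. Pulling $\bar\xi$ back to the $K$-invariant function $\xi := \bar\xi\circ\pi$ on $K^\Gamma$ via $\pi:K^\Gamma\to Y$ and modifying $\omega$ once more by the coboundary of $\xi$ preserves the equality $\omega\vert_K \equiv 1$ (because $\xi$ is $K$-invariant) while turning $\omega\vert_\Gamma$ into the morphism $\delta$. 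The resulting cocycle is precisely the continuous group morphism $(k,\gamma) \mapsto \delta(\gamma)$ from $K\rtimes\Gamma$ to $\mathcal{G}$, so $\omega$ is cohomologous to it.

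The main technical obstacle is the middle step, namely invoking cocycle superrigidity for the quotient action $\Gamma\curvearrowright Y = K^\Gamma/i(K)$, which is not itself a Bernoulli shift. What has to be checked is that the standard malleable deformation of $(K\times K)^\Gamma$ can be arranged so as to commute with the diagonal $i(K)$-translation and hence descend to $Y\times Y$, while preserving the mixing needed by Popa's argument; equivalently, one can work directly on $K^\Gamma$ with Popa's theorem for the Bernoulli action and then arrange the coboundary to be $K$-invariant by a uniqueness-of-untwisting argument, exploiting that the cocycle $\omega\vert_\Gamma$ is itself $K$-invariant. Either route packages the same technical input, and once it is in hand the assembly above gives the stated cocycle superrigidity.
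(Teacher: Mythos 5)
Your proposal runs in the opposite order from the paper and, in doing so, lands on a genuinely harder intermediate step that you do not resolve. The paper applies Popa's cocycle superrigidity theorem \cite[Theorem 0.1]{Po05} directly to the Bernoulli action $\Gamma \actson K^\Gamma$ (to which the theorem genuinely applies, $K^\Gamma$ being a Bernoulli shift and $\Gamma$ having an infinite rigid wq-normal subgroup), obtaining that $\omega|_\Gamma$ is cohomologous to a homomorphism $\delta$. It then uses the semidirect-product relation $\alpha_g(k)g = gk$ and the cocycle identity to derive the equivariance $\omega(\alpha_g(k), g\cdot x) = \delta(g)\omega(k,x)\delta(g)^{-1}$, and finally invokes \cite[Lemma 5.4]{PV08b} to conclude that $\omega|_{K\times K^\Gamma}$ is essentially independent of $K^\Gamma$. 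You instead first untwist $\omega|_K$ (correct, and routine since $K$ is compact), descend $\omega|_\Gamma$ to a cocycle on $Y=K^\Gamma/i(K)$ (the computation $\omega(\gamma,k\cdot x)=\omega(\gamma,x)$ is fine), and then try to apply Popa's theorem to $\Gamma \actson Y$.

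That last step is the gap, and you acknowledge it yourself: $\Gamma \actson Y$ is not a Bernoulli action, so Popa's theorem does not apply off the shelf. You would need to show that the s-malleable deformation of $K^\Gamma \times K^\Gamma$ passes to the quotient $Y\times Y$, i.e.\ that it commutes with the diagonal $i(K)$-translation, and that the resulting quotient deformation still satisfies the weak/mild mixing hypotheses Popa needs. This is precisely the content you leave as ``what has to be checked,'' and it is not a formality: the standard malleable flow on $K\times K$ (e.g.\ for $K=\T$, a rotation in $\T^2$) does not in general commute with diagonal translation by $K$, since the rotation matrix does not preserve the diagonal subgroup of $K\times K$. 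The alternative you sketch at the end --- work on $K^\Gamma$ itself and then arrange a $K$-invariant coboundary by a ``uniqueness-of-untwisting argument'' --- is closer in spirit to what the paper actually does, but as written it is too vague to constitute a proof. The paper's order of operations (Popa first, then the translation part via \cite[Lemma 5.4]{PV08b}) is designed exactly to avoid having to prove cocycle superrigidity for the quotient action, and that is the missing ingredient in your approach.
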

\begin{proof}
Let $\omega: (K \rtimes \Gamma) \times K^\Gamma \rightarrow \mathcal{G}$ be a 1-cocycle for the action $K \rtimes \Gamma \actson K^\Gamma$ with values in a Polish group of finite type. By Popa's cocycle superrigidity theorem \cite[Theorem 0.1]{Po05}
we may assume that the restriction $\omega: \Gamma \times K^\Gamma \rightarrow \mathcal{G}$ is a group morphism $\delta$, i.e. $\omega(g,x) = \delta(g)$ for all $g \in \Gamma$ and a.e. $x \in K^\Gamma$. It remains to prove that $\omega_{| K \times K^\Gamma}$ is a group morphism.

For all $g \in \Gamma, k \in K$ and a.e. $x \in K^\Gamma$ we have
\begin{eqnarray*}
\omega(\alpha_g(k), g \cdot x) \delta(g) & = & \omega(\alpha_g(k), g \cdot x) \omega(g,x) \\
& = & \omega(\alpha_g(k) g, x) \\
& = & \omega(g k,x) \\
& = & \omega(g, k \cdot x) \omega(k,x) \\
& = & \delta(g) \omega(k,x)
\end{eqnarray*}
so that $\omega(\alpha_g(k), g \cdot x) = \delta(g) \omega(k,x) \delta(g)^{-1}$. Applying \cite[Lemma 5.4]{PV08b} to the restriction $\omega: K \times K^\Gamma \rightarrow \mathcal{G}$ then gives that $\omega_{| K \times K^\Gamma}$ is essentially independent of $K^\Gamma$. It follows that $\omega$ is a group morphism.
\end{proof}

Remark that for $\Gamma = \SLn \ast_{\Sigma} (\Sigma \times \Lambda)$ as in Notation \ref{notation}, $\SLn$ is an infinite rigid subgroup that is wq-normal in $\Gamma$. We now prove Theorem \ref{triv}.

\begin{proof}[Proof of Theorem \ref{triv}.]
Let $p$ be a projection in $M$ and let $\alpha: M \rightarrow pMp$ be a stable automorphism of $M$. We will prove that $p = 1$, i.e. $\alpha$ is an automorphism of $M$. This means that $\mathcal{F}(M) =\{1\}$.

Denote $K := \mathbb{Z}_p^n$ and $Z := \mathbb{Z}^n$. Recall that $X = K^\Gamma$ with $\Gamma$ as in Notation \ref{notation} and that $M = \rL^\infty(X) \rtimes (Z \rtimes \Gamma)$ where $Z$ acts by translation after embedding by $i$ and $\Gamma$ acts by Bernoulli shift. Denote $B := \rL^\infty(X) \rtimes Z$. Remark that $\mathcal{Z}(B) = \rL^\infty(X/i(K))$. By Lemma \ref{lemma.cornerB} there exist projections $q,r \in B$ such that after composition of $\alpha$ with an inner automorphism of $M$, we have $\alpha(q) = r$ and $\alpha(qBq)=rBr$. Furthermore $\Psi_0: \mathcal{Z}(B)z_B(q) \rightarrow \mathcal{Z}(B) z_B(r)$ given by the composition of
\begin{equation*}
\begin{tabular}{ccccccc}
$\mathcal{Z}(B)z_B(q)$ & $\overset{\vphi_q}{\rightarrow}$ & $\mathcal{Z}(q B q)$ & $\overset{\alpha}{\rightarrow}$ & $\mathcal{Z}(r B r)$ & $\overset{\vphi_r^{-1}}{\rightarrow}$ & $\mathcal{Z}(B) z_B(r)$
\end{tabular}
\end{equation*}
is a stable orbit equivalence of the action $\Gamma \actson X/i(K)$. To simplify notation in the rest of the proof, we will use $z(q), z(r)$ for $z_B(q), z_B(r)$ respectively.

Denote by $\mathcal{Q}, \mathcal{R} \subset X/i(K)$ the support of $z(q), z(r)$ respectively. Let $\Delta_0: \mathcal{R} \rightarrow \mathcal{Q}$ be the measure space isomorphism such that $\Psi_0(b) = b \circ \Delta_0$ for all $b \in Z(B)z(q) = \rL^\infty(\mathcal{Q})$. By ergodicity of the action $\Gamma \actson X/i(K)$, we can extend $\Delta_0$ to a local isomorphism from $X/i(K)$ to $X/i(K)$ that is orbit preserving, as explained in Remark \ref{remark-extend-SOE}.

Let $G_1$ be the locally compact second countable group $K \rtimes \Gamma$, having $K$ as a compact open normal subgroup. By Lemma \ref{cocyclesuperrigid} the action $G_1 \overset{\si}{\actson} X$ where $K$ acts by translation after embedding by $i$ and $\Gamma$ acts by Bernoulli shift, is $\mathcal{U}_{fin}$-cocycle superrigid. Remark that the restricted action $\si_{|K}$ is proper by compactness of $K$ and that $G_1/K \actson X/i(K)$ is the action $\Gamma \actson X/i(K)$. 

Since the action $\Gamma \actson X/i(K)$ is mixing, it is not induced from a proper subgroup. Applying \cite[Lemma 5.10]{PV08b} to the stable orbit equivalence $\Delta_0$ between the action $\Gamma \actson X/i(K)$ and itself, we find an open normal subgroup $K_1 \triangleleft G_1$ such that the following holds.
\begin{enumerate}
\item[(i)] The restricted action $\si_{|K_1}$ is proper. 
\item[(ii)] The actions $G_1/K_1 \actson X/K_1$ and $\Gamma \actson X/i(K)$ are conjugate through a non-singular isomorphism $\Delta: X/K_1 \rightarrow X/i(K)$ and a group isomorphism $\delta: G_1/K_1 \rightarrow \Gamma$.
\item[(iii)] $\Delta_0(i(K) \cdot x) \in \Gamma \cdot \Delta(K_1 \cdot x)$ for almost all $x \in X$.
\end{enumerate}
Since the restricted action $K_1 \actson X$ is essentially free and proper, there exists a measurable map $\pi: X \rightarrow K_1$ such that $\pi(k \cdot x) = k \pi(x)$ for almost all $(k,x) \in K_1 \times X$. Then the pushforward of the invariant probability measure on $X$ is an invariant probability measure on $K_1$. So $K_1$ is compact. Since $K_1$ is a compact normal subgroup of $G_1 = K \rtimes \Gamma$, the image of $K_1$ in $\Gamma$ is a finite normal subgroup of $\Gamma$. Being an icc  group (i.e. a group with infinite conjugacy classes), $\Gamma$ has no non-trivial finite normal subgroups. It follows that the image of $K_1$ in $\Gamma$ is trivial, so that $K_1 \subset K$. 

As $K_1$ is open and $K$ is compact, $K_1$ is a finite index subgroup of $K$. So $K/K_1$ is a finite normal subgroup of $G_1/K_1$. By (ii) the group $G_1/K_1$ is isomorphic to $\Gamma$, hence $G_1/K_1$ has no non-trivial finite normal subgroups. But then $K/K_1$ is trivial, so $K_1 = K$. It follows that $\delta$ is a group automorphism of $\Gamma$ and that $\Delta: X/i(K) \rightarrow X/i(K)$ is a $\delta$-conjugacy between the action $\Gamma \actson X/i(K)$ and itself, satisfying
\begin{equation}
\Delta_0(i(K) \cdot x) \in \Gamma \cdot \Delta(i(K) \cdot x) \label{similar-SOE}
\end{equation}
for almost all $x \in X$. Remark that $\Delta$ is a measure space isomorphism, as any conjugacy between ergodic probability measure preserving actions is measure preserving. In particular, the compression constant of $\Delta_0$ is 1.

Then, by ergodicity of the action $\Gamma \actson X/i(K)$, one can build a measure space isomorphism $\tilde{\Delta}_0: X/i(K) \rightarrow X/i(K)$ such that $\tilde{\Delta}_{0|\mathcal{R}} = {\Delta_0}_{|\mathcal{R}}$ and $\tilde{\Delta}_0(i(K) \cdot x) \in \Gamma \cdot \Delta_0(i(K) \cdot x)$ for almost all $x \in X$. In particular, $\tilde{\Delta}_0$ still satisfies (\ref{similar-SOE}) for almost all $x \in X$. 

It follows that there exists a unitary $u \in \rL^\infty(X/i(K)) \rtimes \Gamma$ such that $u (b \circ \tilde{\Delta}_0) u^* = b \circ \Delta$ for all $b \in \rL^\infty(X/i(K))$. Recall that the stable orbit equivalence $\vphi_r^{-1} \circ \alpha \circ \vphi_q$ of the action $\Gamma \actson X/i(K)$ is given by $(\vphi_r^{-1} \circ \alpha \circ \vphi_q)(b) = b \circ \Delta_0 = b \circ \tilde{\Delta}_0$ for all $b \in Z(B)z(q)$. Replacing $r$ by $uru^*$ and $\alpha$ by $\Ad u \circ \alpha$, we then find that $(\vphi_r^{-1} \circ \alpha \circ \vphi_q)(b) = b \circ \Delta$ for all $b \in Z(B)z(q)$. 

To summarize, we found a group automorphism $\delta$ of $\Gamma$ and a $\delta$-conjugacy $\Delta$ of the action $\Gamma \actson X/i(K)$ such that $(\vphi_r^{-1} \circ \alpha \circ \vphi_q)(b) = b \circ \Delta$ for all $b \in Z(B)z(q)$. For convenience, we denote $\Psi(b) = b \circ \Delta$ for all $b \in \rL^\infty(X/i(K))$.

Let $\tau$ be the unique tracial state on $M$. We show that after composition with an inner automorphism of $M$, $\alpha$ satisfies
\begin{equation}\label{eq.alpha-almostconj}
\alpha(b) = \Psi(b)p_0  \quad\text{for all}\quad b \in \mathcal{Z}(B)z(q) \;,
\end{equation}
where $p_0$ is a projection in $B$ of trace $\tau(p) \tau(z(q))$. The proof makes use of the following equality. For every projection $q_0 \leq q$ we have
\begin{equation}\label{eq.condexp-alpha}
E_{\mathcal{Z}(B)}(\alpha(q_0)) = \tau(p) \Psi(E_{\mathcal{Z}(B)}(q_0)) \; .
\end{equation}
Here $E_{\mathcal{Z}(B)}$ denotes the unique trace preserving conditional expectation of $M$ onto $\mathcal{Z}(B)$.
To see that (\ref{eq.condexp-alpha}) holds, first remark that $\alpha(xq) = \Psi(x)r$ for all $x \in \mathcal{Z}(B)$. Then use the fact that $\alpha$ is $\tau$-scaling ($\tau \circ \alpha = \tau(p) \tau$) and that $E_{\mathcal{Z}(B)}$ and $\Psi$ are $\tau$-preserving to show that
\begin{eqnarray*}
\tau( \Psi(x)  E_{\mathcal{Z}(B)}(\alpha(q_0)) ) & = & \tau( \Psi(x)  \alpha(q_0) ) \\
& = & \tau(\alpha(x q_0)) \\
& = & \tau(p) \tau(x q_0) \\
& = & \tau(p) \tau(E_{\mathcal{Z}(B)}(x q_0)) \\
& = & \tau(p) \, \tau( \Psi(x)  \Psi(E_{\mathcal{Z}(B)}(q_0)) )
\end{eqnarray*}
for all $x \in \mathcal{Z}(B)$. Formula (\ref{eq.condexp-alpha}) follows.

We will show that there exist partial isometries $v_n, w_n \in B$ satisfying the following properties:
\begin{equation}\label{eq.partisometries}
v_n^* v_n \leq q \;\; , \;\; \alpha(v_n^* v_n) = w_n^* w_n \;\; , \;\; \sum_n v_n v_n^* = z(q) \;\; , \;\; w_n w_n^* \;\; \text{mutually orthogonal} \; .
\end{equation}
Then define $p_0 := \sum w_n w_n^*$. Note that $p_0$ is a projection in $B$ of trace $\tau(p) \tau(z(q))$. Define $v := \sum_n w_n \alpha(v_n^*) \in M$. Remark that $vv^* = p_0$ and $v^*v = \alpha(z(q))$. Extend $v$ to a unitary $u \in M$. One verifies that $(\Ad u \circ \alpha)(b) = \Psi(b) p_0$ for all $b \in \mathcal{Z}(B) z(q)$ so that (\ref{eq.alpha-almostconj}) is shown.

It remains to prove the existence of the partial isometries in (\ref{eq.partisometries}). Consider the set
\begin{eqnarray*}
\mathcal{J} = \bigl\{ \; \bigl\{ \; (v_n, w_n) \in B \times B  \;\big|\; v_n^* v_n \leq q, \; \alpha(v_n^* v_n) = w_n^* w_n, \;\\ v_n v_n^* \; \text{mutually} \perp, \; w_n w_n^* \; \text{mutually} \perp \; \bigr\} \; \bigr\} \; .
\end{eqnarray*}
Remark that $\{(q,r)\} \in \mathcal{J}$. The set $\mathcal{J}$ is partially ordered by inclusion. By Zorn's lemma, take a maximal element $\{(v_n,w_n) \mid n \}$ of $\mathcal{J}$. It suffices to show that $\sum v_n v_n^* = z(q)$. Assume that $\sum v_n v_n^* < z(q)$. Then there exists a partial isometry $v \in B$ such that $v^*v \leq q$ and $vv^* \leq z(q) - \sum v_n v_n^*$. We claim that 
\begin{equation}\label{eq.subeq}
\alpha(v^* v) \prec z(r) - \sum w_n w_n^* \; ,
\end{equation}
where $\prec$ refers to the comparison of projections in $B$.
So there exists $w \in B$ such that $w^* w = \alpha(v^* v)$ and $w w^* \leq z(r) - \sum w_n w_n^*$. This means that we can add $(v,w)$ to the family $\{(v_n,w_n) \mid n \}$, contradicting its maximality. It remains to prove (\ref{eq.subeq}). Using (\ref{eq.condexp-alpha}) we find that
$$ E_{\mathcal{Z}(B)}(\alpha(v^* v)) = \tau(p) \Psi(E_{\mathcal{Z}(B)}(v^* v)) = \Psi(E_{\mathcal{Z}(B)}(\tau(p)  v v^*)) \; . $$
On the other hand
\begin{eqnarray*}
E_{\mathcal{Z}(B)} \big(z(r) - \sum w_n w_n^* \big) & = & \Psi(z(q)) - \tau(p) \Psi \big(E_{\mathcal{Z}(B)}(\sum_n v_n v_n^*)\big) \\
& = & \Psi \big(E_{\mathcal{Z}(B)}(z(q) - \tau(p) \sum_n v_n v_n^* ) \big) \; .
\end{eqnarray*}
Since $\tau(p) v v^* \leq z(q) - \tau(p) \sum v_n v_n^*$, it is clear that $E_{\mathcal{Z}(B)}(\alpha(v^* v)) \leq E_{\mathcal{Z}(B)}(z(r) - \sum w_n w_n^*)$ and (\ref{eq.subeq}) follows.

We now have that $\alpha(b) = \Psi(b)p_0$ for all $b \in \mathcal{Z}(B)z(q)$. Note that $E_{\mathcal{Z}(B)}(p_0) = \tau(p) z(r)$. Using ergodicity of the action $\Gamma \acts X/i(K)$, one builds a unitary in $u \in M$ such that after composition with $\Ad u$, $\alpha$ satisfies
\begin{equation*}
\alpha(b) = \Psi(b)\tilde{p}  \quad\text{for all}\quad b \in \mathcal{Z}(B) \;,
\end{equation*}
where $\tilde{p}$ is a projection in $B$.

Denote by $(\si_g)_{g \in \Gamma}$ the action of $\Gamma$ on $\rL^\infty(X) \rtimes Z$, implemented by $\Ad u_g$ and corresponding to the Bernoulli action on $\rL^\infty(X)$ and given by $\al_g$ on $\mcL Z$. Since the relative commutant of $\rL^\infty(X/i(K))$ inside $M$ equals $\rL^\infty(X) \rtimes Z$, it follows that
$$\al(u_{\delta(g)}) = \om_g u_g \tilde{p} \quad\text{for all $g \in \Gamma$, where}\quad \om_g \in \tilde{p}(\rL^\infty(X) \rtimes Z)\si_g(\tilde{p}) \; .$$
Note that
$$\om_g \om_g^* = \tilde{p} \;\; , \;\; \om_g^* \om_g = \si_g(\tilde{p}) \quad\text{and}\quad \om_{gh} = \om_g \, \si_g(\om_h) \;\;\text{for all}\;\; g,h \in \Gamma \; .$$
By Theorem \ref{thm.cocycle} (see Section \ref{theorem.cocyclesuperrigidity}) there exists a projection $q \in \B(\ell^2(\N)) \ovt \mcL Z$ with $(\Tr \ot \tau)(q) = \tau(\tilde{p})$, a partial isometry $v \in \B(\C,\ell^2(\N)) \ovt (\rL^\infty(X) \rtimes Z)$ and a family of partial isometries $\gamma_g \in q (\B(\ell^2(\N)) \ovt \mcL Z) \si_g(q)$ satisfying $\gamma_g \gamma_g^* = q, \gamma_g^* \gamma_g = \si_g(q)$ and $\gamma_{gh} = \gamma_g \si_g(\gamma_h)$ such that
$$v^* v = \tilde{p} \;\; , \;\; vv^* = q \quad\text{and}\quad \om_g = v^* \, \gamma_g \, \si_g(v) \;\;\text{for all}\;\; g \in \Gamma \; .$$
Here we view $\B(\C,\ell^2(\N)) \subset \B(\C \oplus \ell^2(\N), \C \oplus \ell^2(\N))$.

Denote by $E$ the map $\Tr \ot \id: \B(\ell^2(\N)) \ovt \mcL Z \recht \mcL Z$. Since $q$ and $\si_g(q)$ are equivalent in $\B(\ell^2(\N)) \ovt \mcL Z$, we get that $E(q) = \si_g(E(q))$ for all $g \in \Gamma$. Remark that $(\mathcal{L} Z,\tau)$ is isomorphic to $(\mathrm{L}^\infty(\mathbb{T}^n),\lambda)$ where $\lambda$ denotes the Lebesgue measure on $\mathbb{T}^n$. By ergodicity of the action $\Gamma \actson \T^n$, it follows that $E(q)$ is equal to a constant $C \neq 0$. Viewing $q$ as a measurable function on $\mathbb{T}^n$ that takes values in the projections of $\B(\ell^2(\mathbb{N}))$, we have that $\Tr(q(x)) = C$ for almost all $x$ in $\mathbb{T}^n$. In particular, $\Tr(q(x)) \geq 1$ almost everywhere. Integrating over $\mathbb{T}^n$, we find that $(\Tr \otimes \tau)(q) \geq 1$. Because $(\Tr \ot \tau)(q) = \tau(\tilde{p}) \leq 1$, we must have $\tilde{p} = 1$. This means that $\alpha$ is an automorphism of $M$.
\end{proof}

\section{$M$ has a Cartan subalgebra that is non-conjugate to $\rL^\infty(X)$}

\begin{theorem}\label{twocart}
With $G \curvearrowright X$ as in Notation \ref{notation}, $\rL^\infty\left(\frac{(\mathbb{Z}_p^n)^\Gamma}{i(\mathbb{Z}_p^n)}\right)\ovt \mathcal{L}\mathbb{Z}^n$ is a Cartan subalgebra of $M=\rL^\infty(X) \rtimes G$ that is non-conjugate to $\rL^\infty(X)$. The associated equivalence relation is the orbit equivalence relation of the action $\widehat{\mathbb{Z}_p^n} \rtimes \Gamma \curvearrowright \mathbb{T}^n \times \frac{(\mathbb{Z}_p^n)^\Gamma}{i(\mathbb{Z}_p^n)}$ where $\widehat{\mathbb{Z}_p^n}$ acts on $\mathbb{T}^n=\widehat{\mathbb{Z}^n}$ by translation and trivially on $\frac{(\mathbb{Z}_p^n)^\Gamma}{i(\mathbb{Z}_p^n)}$, and $\Gamma$ acts on both factors in the natural way.
\end{theorem}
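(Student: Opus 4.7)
The strategy is to exhibit $A_2$ as a MASA in the intermediate subalgebra $B := \rL^\infty(X) \rtimes \Z^n$ of $M$ and then identify its normalizer in $M = B \rtimes \Gamma$. Setting $K = \Z_p^n$ and $Y = X/i(K)$, I first observe that since $\Z^n$ embeds densely in the compact group $K$ and acts on $X$ via diagonal $K$-translation, its orbit closures in $X$ coincide with the $K$-orbits, so $\mathcal{Z}(B) = \rL^\infty(X)^{\Z^n} = \rL^\infty(Y)$. Because $\mathcal{L}(\Z^n) \subset B$ commutes with $\mathcal{Z}(B)$, the tensor product $A_2 = \rL^\infty(Y)\ovt \mathcal{L}(\Z^n)$ is an abelian subalgebra of $B$, and a direct computation of $(\mathcal{L}(\Z^n))'\cap B = \rL^\infty(X)^{\Z^n}\cdot\mathcal{L}(\Z^n) = A_2$ shows it is MASA there. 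Equivalently, Fourier-dualizing the $\Z^n$-part gives $B \cong \rL^\infty(\T^n\times Y)\rtimes\widehat{K}$, where $\widehat{K}=\widehat{\Z_p^n}$ acts on $\T^n=\widehat{\Z^n}$ by translation through the natural dense embedding $\widehat{K}\hookrightarrow\T^n$ and trivially on $Y$.

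To promote $A_2$ to a MASA in $M$ and identify its normalizer, I would check that the induced $\Gamma$-action on the spectrum $\T^n\times Y$ is essentially free. For $g$ with $q(g)\neq e$, $\hat\alpha_g$ is a nontrivial automorphism of $\T^n$ with measure-zero fixed set; for $g\in\ker q\setminus\{e\}$, the set of $\Gamma$-fixed points in $Y$ pulls back to $\{x\in X : g\cdot x - x \in i(K)\}$, a proper closed subgroup of the compact abelian group $X$ (because the image of the continuous homomorphism $\psi_g : x\mapsto g\cdot x - x$ is not contained in $i(K)$), and hence has measure zero. The standard twisted-intertwiner argument then forces $A_2'\cap M = A_2$. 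Each $v_g$ normalizes $A_2$ (as $A_2$ is $\Gamma$-invariant), and each character $\tilde\chi \in \widehat{X}\subset \rL^\infty(X)$ normalizes $A_2$ because $\tilde\chi\, u_z\,\tilde\chi^{-1} = \tilde\chi(i(z))\, u_z \in \mathcal{L}(\Z^n) \subset A_2$. Together these generate $\rL^\infty(X)\vee\mathcal{L}(\Z^n)\vee\{v_g\}'' = M$, so $A_2$ is Cartan. To read off the associated equivalence relation I compute the induced spatial action on $\T^n\times Y$: $\tilde\chi$ translates $\T^n$ by $\tilde\chi|_{i(K)}\in\widehat{K}$ and fixes $Y$, while $v_g$ acts as $\hat\alpha_g$ on $\T^n$ and as the induced Bernoulli shift on $Y$. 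Since the restriction map $\widehat{X}\twoheadrightarrow\widehat{K}$ is surjective, the resulting equivalence relation is precisely $\cR(\widehat{\Z_p^n}\rtimes\Gamma\actson\T^n\times Y)$.

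Non-conjugacy of $A_1$ and $A_2$ follows from distinctness of the fundamental groups of the associated relations: a conjugating automorphism of $M$ would induce $\cR_1\cong\cR_2$ and hence $\mathcal{F}(\cR_1)=\mathcal{F}(\cR_2)$; but Theorem~\ref{triv} gives $\mathcal{F}(\cR_1)=\{1\}$, while $1/p^n\in\mathcal{F}(\cR_2)$ is obtained from the $p^n$-to-one self-covering $\T^n\to\T^n$, $\theta\mapsto p\theta$: a fundamental domain of measure $1/p^n$ inherits a restricted equivalence relation isomorphic to $\cR_2$ itself, using that $p\cdot\widehat{\Z_p^n}=\widehat{\Z_p^n}$ and the $\Gamma$-equivariance of multiplication by $p$ (this is the key computation later refined in Theorem~\ref{nontriv}). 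The principal technical obstacle is the essential-freeness check for $g\in\ker q$, which does not follow directly from essential freeness of the Bernoulli $\Gamma$-action on $X$; one instead has to analyze the homomorphism $\psi_g : X\to X$ and show that its preimage of $i(K)$ is a proper subgroup of $X$.
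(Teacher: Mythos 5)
The overall strategy you use---exhibit $A_2$ as a MASA in the intermediate algebra $B = \rL^\infty(X)\rtimes\Z^n$, Fourier-dualize $B$ to a crossed product over $\T^n\times Y$, check essential freeness, and list the normalizing unitaries---is essentially the route the paper takes through its Lemma~\ref{Cartan}, modulo reorganizing the relative commutant computation. However, there is a genuine gap in the essential-freeness step for $g\in\ker q\setminus\{e\}$. You argue that $\psi_g^{-1}(i(K))$, being a \emph{proper} closed subgroup of the compact group $X=(\Z_p^n)^\Gamma$, must have Haar measure zero. That implication is false in totally disconnected compact groups: for instance $\{x\in X : x_e\in p\Z_p^n\}$ is a proper closed subgroup of index $p^n$ and hence has measure $p^{-n}>0$. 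What is actually needed, and what the hypothesis of the paper's Lemma~\ref{Cartan} (and its invocation of~\cite[Lemma 5]{SV11}) demands, is that $\psi_g^{-1}(i(K))$ have \emph{infinite} index in $X$. This does hold here---e.g.\ for $g\neq e$ choose $h_0$ with $g h_0\neq h_0$ and observe that two sequences $x,x'$ supported on $\{h_0\}$ lie in the same coset of $\psi_g^{-1}(i(K))$ only when $x_{h_0}=x'_{h_0}$, so the infinitely many values in $K$ give infinitely many cosets---but you must make this argument; properness alone does not suffice.

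A secondary imprecision: when you invoke the ``standard twisted-intertwiner argument'' to pass from $A_2$ MASA in $B$ to $A_2$ MASA in $M=B\rtimes\Gamma$, freeness of $\Gamma\actson\T^n\times Y$ is not literally the right condition, because the nonzero Fourier coefficients $b_g\in B$ are twisted intertwiners between $A_2$ and $\si_g(A_2)=A_2$ \emph{inside} $B$, and their polar parts already implement partial isomorphisms with graph in the equivalence relation $\cR_B$ of $A_2\subset B$ (namely translation by $\widehat{\Z_p^n}$ on the $\T^n$-coordinate). The condition that forces $b_g=0$ for $g\neq e$ is therefore that $\{s : g\cdot s\sim_{\cR_B} s\}$ is null, i.e.\ that $\{\theta:(\hat\alpha_g-\id)(\theta)\in\widehat{\Z_p^n}\}\times\{y:g\cdot y=y\}$ is null. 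For $q(g)\neq e$ this reduces to the preimage of the countable group $\widehat{\Z_p^n}$ under $\gamma^T-I$ being null, and for $g\in\ker q$ it reduces to the fixed-point condition in $Y$ above, so the conclusion is correct, but the check you describe is not exactly the hypothesis the argument uses. The paper sidesteps both of these delicacies by first computing $\mcL(\Z^n)'\cap M$ via Fourier decomposition (which eliminates all $g$ with $q(g)\neq e$ and forces the coefficients into $\rL^\infty(Y)$), and only then cutting down by $\rL^\infty(Y)$, so the essential-freeness statement it needs is exactly and only the infinite-index condition for $g\in\ker q$. Your non-conjugacy argument via distinctness of fundamental groups is fine in outline and agrees with how the paper's main theorem assembles Theorems~\ref{triv} and~\ref{nontriv}.
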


We will prove this theorem using the more general Lemma \ref{Cartan} below.

\begin{lemma}\label{Cartan}
Let $\compZ$ be a compact abelian group and $Z<\compZ$ a countable subgroup. Let $Z_0 < Z$ be an infinite subgroup. Assume that $Z$ acts on $\compZ$ by translation. Let $\Gamma$ be a countable group that acts on $\compZ$ by continuous group automorphisms $(\alpha_g)_{g \in \Gamma}$ preserving $Z$ and $Z_0$. Define $$M:=\mathrm{L}^\infty(\compZ)\rtimes (Z \rtimes \Gamma) \;.$$ Denote $\compZ_0:=\overline{Z_0}$. Assume that for all $g \in \Gamma: \{z- \alpha_g(z) \mid z \in Z_0\}$ is either infinite or trivial.
Denote $\Gamma_0 := \{g \in \Gamma \mid \alpha_g(z) = z\mbox{ for all } z \in Z_0\}$. Assume that $\{x \in \frac{\compZ}{\compZ_0} \mid \alpha_g(x) = x\}$ has infinite index in $\frac{\compZ}{\compZ_0}$ for all $g \in \Gamma_0\backslash\{e\}$.

Assume finally that $Z \cap \compZ_0 = Z_0$. Then $A:=\mathcal{L}(Z_0) \ovt \rL^\infty(\frac{\compZ}{\compZ_0}) =\mathrm{L}^\infty(\widehat{Z_0} \times \frac{\compZ}{\compZ_0})$ is a Cartan subalgebra of $M$ and the induced equivalence relation on $\widehat{Z_0}\times \frac{\compZ}{\compZ_0}$ is given by the action $(\widehat{\compZ_0} \times \frac{Z}{Z_0}) \rtimes \Gamma \curvearrowright \widehat{Z_0}\times \frac{\compZ}{\compZ_0}$, where $\widehat{\compZ_0} \times \frac{Z}{Z_0}$ acts on $\widehat{Z_0} \times \frac{\compZ}{\compZ_0}$ by translation and $\Gamma$ acts on both factors in the natural way.
\end{lemma}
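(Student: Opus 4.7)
The plan is to verify that $A$ is maximal abelian in $M$, that the normalizing groupoid of $A$ generates $M$, and that reading off the induced action on $\widehat{Z_0} \times \compZ/\compZ_0$ recovers the advertised $(\widehat{\compZ_0} \times Z/Z_0) \rtimes \Gamma$ orbit equivalence relation. Abelianness of $A$ is immediate: $\mcL(Z_0)$ is generated by $\{u_z : z \in Z_0\}$ and $\rL^\infty(\compZ/\compZ_0) \subset \rL^\infty(\compZ)$ consists of $\compZ_0$-invariant functions, hence is pointwise fixed by conjugation with $u_z$ for every $z \in Z_0 \subset \compZ_0$; the joint spectrum is $\widehat{Z_0} \times \compZ/\compZ_0$.

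For the MASA property I Fourier-decompose $x \in A' \cap M$ as $x = \sum_{(w,g) \in Z \rtimes \Gamma} a_{(w,g)} u_{(w,g)}$ with $a_{(w,g)} \in \rL^2(\compZ)$. Commutation with $u_z$ for every $z \in Z_0$ yields, after sorting out the semidirect product group law, the relation $a_{(w + z - \al_g(z), g)}(y) = a_{(w,g)}(y - z)$. For $g \in \Gamma$ with $\al_g|_{Z_0}$ non-trivial the first hypothesis makes $\{z - \al_g(z) : z \in Z_0\}$ infinite, so the $\rL^2$-norms $\|a_{(w + z - \al_g(z), g)}\|_2$ are equal across an infinite index set, which by $\sum_w \|a_{(w,g)}\|_2^2 < \infty$ forces these coefficients to vanish; thus only $g \in \Gamma_0$ contributes, and for such $g$ the same relation shows each $a_{(w,g)}$ is $Z_0$-invariant, hence lies in $\rL^\infty(\compZ/\compZ_0)$. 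Next, commutation with $\rL^\infty(\compZ/\compZ_0)$ forces $a_{(w,g)}$ to be supported on the solutions of $\bar y - \bar \al_g(\bar y) = \bar w$ in $\compZ/\compZ_0$, a coset of $\mathrm{Fix}(\bar \al_g)$. For $g \in \Gamma_0 \setminus \{e\}$ the second hypothesis gives this coset Haar measure zero, so $a_{(w,g)} = 0$; for $g = e$ the condition collapses to $\bar w = 0$, and the third hypothesis $Z \cap \compZ_0 = Z_0$ forces $w \in Z_0$. Hence $x \in A$.

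For regularity and the identification of the orbit equivalence relation, the characters $v_\chi$ (for $\chi \in \widehat{\compZ}$, viewed inside $\rL^\infty(\compZ) \subset M$) satisfy $v_\chi u_z v_\chi^* = \chi(z) u_z$ for $z \in Z_0$ and commute with $\rL^\infty(\compZ/\compZ_0)$; they implement translation by $\chi|_{Z_0}$ on the $\widehat{Z_0}$ coordinate, and by Pontryagin duality $\chi|_{Z_0}$ ranges exactly over $\widehat{\compZ_0}$ as $\chi$ runs through $\widehat{\compZ}$. The canonical unitaries $u_z$ for $z \in Z$ commute with $\mcL(Z_0)$ (since $Z$ is abelian) and act on $\rL^\infty(\compZ/\compZ_0)$ by translation by $\bar z \in Z/Z_0$. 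Finally, $u_g$ for $g \in \Gamma$ acts by $\widehat{\al_g|_{Z_0}}$ on $\widehat{Z_0}$ and by $\bar \al_g$ on $\compZ/\compZ_0$. The von Neumann algebra generated by $A$ together with the $v_\chi$, the $u_z$ and the $u_g$ contains $\rL^\infty(\compZ)$ (via the $v_\chi$) and hence equals $M$, so $A$ is Cartan, and the induced action on $\widehat{Z_0} \times \compZ/\compZ_0$ is exactly the asserted $(\widehat{\compZ_0} \times Z/Z_0) \rtimes \Gamma$ action. The main obstacle will be the Fourier bookkeeping in the MASA step: one must combine the two commutation constraints so that precisely the three algebraic hypotheses are consumed, and handle the $g = e$ case separately from the non-trivial $g \in \Gamma_0$ case. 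The regularity and orbit equivalence computations are then routine.
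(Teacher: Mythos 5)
Your proof is correct and follows essentially the same route as the paper: a two-stage relative-commutant computation using Fourier decomposition (first kill $g\notin\Gamma_0$ via square-summability, then use commutation with $\rL^\infty(\compZ/\compZ_0)$ together with the infinite-index and $Z\cap\compZ_0=Z_0$ hypotheses), followed by exhibiting the normalizing unitaries $v_\chi$, $u_z$, $u_g$ and reading off the action on the spectrum. The only cosmetic difference is that the paper phrases the second stage as an appeal to essential freeness of $\frac{Z}{Z_0}\rtimes\Gamma_0\curvearrowright\frac{\compZ}{\compZ_0}$ (citing [SV11, Lemma~5]), whereas you unpack that into the same direct support computation.
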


\begin{proof}
Writing the Fourier decomposition of an element in $M$, one easily checks that
\begin{equation}\label{maxab1}
\mathcal{L}(Z_0)'\cap M = \mathrm{L}^\infty(\frac{\compZ}{\compZ_0})\rtimes (Z \rtimes \Gamma_0) \;.
\end{equation}

Now let $x \in \mathrm{L}^\infty(\frac{\compZ}{\compZ_0})' \cap \left(\mathrm{L}^\infty(\frac{\compZ}{\compZ_0})\rtimes(Z \rtimes \Gamma_0)\right)$. Write $x = \sum_{(s,g)\in Z\rtimes \Gamma_0} a_{(s,g)}u_{(s,g)}$ with $a_{s,g} \in \rL^\infty\left(\frac{\compZ}{\compZ_0}\right)$. Then for all $f \in \rL^\infty\left(\frac{\compZ}{\compZ_0}\right)$ we have
$$\sum_{(s,g)\in Z\rtimes\Gamma_0}f a_{(s,g)}u_{(s,g)} =\sum_{(s,g)\in Z\rtimes\Gamma_0} f\left((sZ_0,g)^{-1}\cdot\ \right)a_{(s,g)}u_{(s,g)} \;.$$
So for all $(s,g) \in Z \rtimes \Gamma_0$, we get that $fa_{(s,g)}=f\left((sZ_0,g)^{-1}\cdot\ \right) a_{(s,g)}$ for all $f$. By assumption $\{x \in \frac{\compZ}{\compZ_0}\mid \alpha_g(x) = x\}$ has infinite index in $\frac{\compZ}{\compZ_0}$ for all $g \in \Gamma_0 \setminus \{e\}$. By \cite[Lemma 5]{SV11} we find that $\frac{Z}{Z_0} \rtimes \Gamma_0 \curvearrowright \frac{\compZ}{\compZ_0}$ is essentially free.
So if $(s,g) \not\in Z_0$ then $a_{(s,g)}=0$.
This implies that $$\left(\mathrm{L}^\infty\left(\frac{\compZ}{\compZ_0}\right)\right)' \cap \left(\mathrm{L}^\infty\left(\frac{\compZ}{\compZ_0}\right) \rtimes (Z \rtimes \Gamma_0)\right) = \mathrm{L}^\infty\left(\frac{\compZ}{\compZ_0}\right)\rtimes Z_0 \;.$$
In combination with (\ref{maxab1}), we get that $A$ is maximal abelian in $M$.

For any $\omega \in \hat{\compZ}$ we define the unitary $U_\omega \in \mathrm{L}^\infty(\compZ)$ by $U_\omega(x)=\omega(x)$. One easily checks that $A$ is normalized by $\{u_s \mid s \in Z\}$, $\{u_g \mid g \in \Gamma\}$ and $\{U_\omega\mid \omega \in \widehat{\compZ}\}$.
So $A$ is regular in $M$ and hence $A$ is a Cartan subalgebra of $M$.

It remains to understand the induced equivalence relation. We check how $\Ad u_s$, $\Ad u_g$ and $\Ad U_\omega$ act on $A$ for $s \in Z, g \in \Gamma, \omega \in \widehat{\compZ}$.

It is clear that $\Ad u_s$ does not act on $\mathcal{L}(Z_0)$, but only on $\mathrm{L}^\infty(\frac{\compZ}{\compZ_0})$ by translating by the class of $s$ in $\frac{Z}{Z_0}$. Furthermore $\Ad u_g$ acts both on $\mathcal{L}(Z_0)$ and $\mathrm{L}^\infty(\frac{\compZ}{\compZ_0})$ in the usual way. Finally $\Ad U_\omega$ only acts on $\mathrm{L}^\infty(\widehat{Z_0})$ by restricting to a character on $\compZ_0$ and then translating by this character.

We found actions of $\frac{Z}{Z_0}$, $\Gamma$ and $\widehat{\compZ_0}$ on $\mathrm{L}^\infty(\widehat{Z_0} \times \frac{\compZ}{\compZ_0})$. Remark that for all $s \in Z, g \in \Gamma, \omega \in \widehat{\compZ}$ we have
$$
\Ad u_g \Ad u_s = \Ad u_{\alpha_g(s)} \Ad u_{g}
$$
and
$$
\Ad U_\omega \Ad u_g = \Ad u_g \Ad U_\omega(g^{-1}\cdot \ ) \;.$$
It follows that the induced equivalence relation is given by the orbits of the action $$\left(\widehat{\compZ_0} \times \frac{Z}{Z_0}\right)\rtimes \Gamma \curvearrowright \widehat{Z_0} \times \frac{\compZ}{\compZ_0} \;,$$
where $\widehat{\compZ_0}\times \frac{Z}{Z_0}$ acts on $\widehat{Z_0} \times \frac{\compZ}{\compZ_0}$ by translation and $\Gamma$ acts in the natural way.
\end{proof}

\begin{proof}[Proof of Theorem \ref{twocart}.]
Let $\Gamma$ be as defined in Section 1. We apply Lemma \ref{Cartan} for $\Gamma \curvearrowright \compZ=(\mathbb{Z}_p^n)^\Gamma$ and $Z=Z_0=\mathbb{Z}^n$, where we embed $\mathbb{Z}^n$ into $(\mathbb{Z}_p^n)^\Gamma$ by $i: \mathbb{Z}^n \to (\mathbb{Z}_p^n)^\Gamma: z \mapsto (\alpha_g(z))_{g \in \Gamma}$.

Remark that for any $g \in \Gamma$ the set $\{z- \alpha_g(z) \mid z \in \mathbb{Z}^n\}$ is either infinite or trivial. Indeed, if $x \in \{z - \alpha_g(z) \mid z \in \mathbb{Z}^n\}$ then all integer multiples of $x$ are also in this set. Set $\Gamma_0 := \{g\in \Gamma\mid \alpha_g(z) = z\mbox{ for all } z \in \mathbb{Z}^n\}$. Let $g \in \Gamma_0 \backslash\{e\}$. It is clear that $\{x \in \frac{(\mathbb{Z}_p^n)^\Gamma}{i(\mathbb{Z}_p^n)} \mid \alpha_g(x) = x\}$ has infinite index in $\frac{(\mathbb{Z}_p^n)^\Gamma}{i(\mathbb{Z}_p^n)}$.

So the conditions of Lemma \ref{Cartan} are satisfied and $\rL^\infty\left(\frac{(\mathbb{Z}_p^n)^\Gamma}{i(\mathbb{Z}_p^n)}\right) \ovt \mathcal{L}\mathbb{Z}^n= \mathrm{L}^\infty(\frac{(\mathbb{Z}_p^n)^\Gamma}{i(\mathbb{Z}_p^n)} \times \mathbb{T}^n)$ is a Cartan subalgebra of $M$. The associated equivalence relation is given by the action $\widehat{\mathbb{Z}_p^n} \rtimes \Gamma \curvearrowright \mathbb{T}^n \times \frac{(\mathbb{Z}_p^n)^\Gamma}{i(\mathbb{Z}_p^n)}$ where $\widehat{\mathbb{Z}_p^n}$ only acts on $\mathbb{T}^n=\widehat{\mathbb{Z}^n}$ and $\Gamma$ acts on both factors.
\end{proof}

From now on we denote by $\mathcal{R}_2$ the equivalence relation associated with the Cartan algebra $\rL^\infty(\frac{(\mathbb{Z}_p^n)^\Gamma}{i(\mathbb{Z}_p^n)})\ovt \mathcal{L}(\mathbb{Z}^n)$ of $M$.

\section{The fundamental group of $\mathcal{R}_2$ is non-trivial}

Finally, we prove that the fundamental group of the equivalence relation given by Theorem \ref{twocart} is non-trivial and can be explicitly computed, using techniques from \cite{PV08b}.

The following lemma can easily be checked.
\begin{lemma}\label{SLnZ}
    Let $n \geq 2$. For all $(\mu_1,\ldots,\mu_n)^T \in \mathbb{Z}^n$, we find
    $$\{(z, 0, \ldots, 0)^T, (0,z,\ldots,0)^T,\ldots,(0,0,\ldots,z)^T\} \subset \SLn \cdot (\mu_1, \ldots,\mu_n)^T \;,$$
    where $z = \gcd(\mu_1,\ldots,\mu_n).$
    \end{lemma}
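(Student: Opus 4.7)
The plan is to establish this by a two-stage reduction: first sweep the vector down to $(z,0,\ldots,0)^T$ using an iterated Euclidean algorithm realized by $\SL_2(\Z)$ matrices embedded into $\SLn$, and then move the entry $z$ into any prescribed coordinate by a signed permutation lying in $\SLn$.

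For the first stage, the key classical fact is that for any $a,b \in \Z$ with $d=\gcd(a,b)$ there exist $x,y \in \Z$ with $xa+yb = d$, and then the matrix
\[ \begin{pmatrix} x & y \\ -b/d & a/d \end{pmatrix} \in \SL_2(\Z) \]
sends $(a,b)^T$ to $(d,0)^T$. I would embed such a matrix in the top-left $2\times 2$ block of $\SLn$ (identity elsewhere) to first replace $(\mu_1,\mu_2,\mu_3,\ldots,\mu_n)^T$ by $(\gcd(\mu_1,\mu_2),0,\mu_3,\ldots,\mu_n)^T$, then iterate: at the $k$-th step combine the current first coordinate with $\mu_{k+1}$ via a $2\times 2$ block acting on coordinates $1$ and $k+1$, producing $(\gcd(\mu_1,\ldots,\mu_{k+1}),0,\ldots,0,\mu_{k+2},\ldots,\mu_n)^T$. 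After $n-1$ steps we reach $(z,0,\ldots,0)^T$, as desired.

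For the second stage, note that a genuine transposition of coordinates $1$ and $k$ has determinant $-1$, so is not in $\SLn$. Instead I would use the signed transposition matrix $T_k$ that acts on the span of $e_1,e_k$ by $e_1 \mapsto e_k$ and $e_k \mapsto -e_1$ and is the identity on the remaining coordinates. Its determinant equals $+1$, so $T_k \in \SLn$, and $T_k \cdot (z,0,\ldots,0)^T$ equals $z$ times the standard basis vector $e_k$, which is exactly the $k$-th vector in the claimed set. Applying this for each $k \in \{1,\ldots,n\}$ (taking $T_1 = \id$) finishes the proof.

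I do not expect any real obstacle here: the only mild technicality is the determinant-$1$ constraint, which forces the use of signed permutations rather than bare transpositions. The rest is a direct invocation of Bezout's identity in matrix form, iterated along the coordinates.
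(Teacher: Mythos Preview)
Your proposal is correct. The paper does not give a proof of this lemma at all---it simply states that it ``can easily be checked''---so your argument supplies the standard details (Bezout in $2\times 2$ blocks, iterated along coordinates, followed by a signed transposition to avoid the determinant obstruction) that the authors left implicit.
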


We now describe the $\SLn$-invariant subgroups of $\mathbb{Z}[\frac{1}{p}]^n$.

\begin{lemma}\label{subgroups}
If $G < \mathbb{Z}[\frac{1}{p}]^n$ is a $\SLn$-invariant subgroup then either $G=\{0\}, G=(a\mathbb{Z})^n$ for some $a \in \mathbb{Z}[\frac{1}{p}]$ or $G=(b\mathbb{Z}[\frac{1}{p}])^n$ for some $b \in \mathbb{Z}$.
\end{lemma}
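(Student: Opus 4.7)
The plan is to reduce the question to classifying subgroups of $\mathbb{Z}[\frac{1}{p}]$ and then to carry out this one-dimensional classification by a filtration argument.

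First I would show that every $\SLn$-invariant subgroup $G \subset \mathbb{Z}[\frac{1}{p}]^n$ has the form $G = S^n$, where
$$S := \{t \in \mathbb{Z}[\tfrac{1}{p}] \mid (t,0,\ldots,0) \in G\}$$
is a subgroup of $\mathbb{Z}[\frac{1}{p}]$. The inclusion $S^n \subset G$ follows because $\SLn$ contains signed permutation matrices that move a nonzero entry from the first coordinate to any other position (up to sign), and signs are harmless since $S$ is a group. For the reverse inclusion, any $v \in G$ may be written $v = \frac{1}{p^k}(\mu_1, \ldots, \mu_n)$ with $\mu_i \in \mathbb{Z}$. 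Lemma \ref{SLnZ} supplies $A \in \SLn$ with $A \cdot (\mu_1, \ldots, \mu_n)^T = (z,0,\ldots,0)^T$ for $z = \gcd(\mu_1,\ldots,\mu_n)$. Applying the same $A$ to $v$ gives $(\frac{z}{p^k}, 0, \ldots, 0) \in G$, hence $\frac{z}{p^k} \in S$. Since $z \mid \mu_i$ in $\mathbb{Z}$, each coordinate $v_i = \frac{\mu_i}{z} \cdot \frac{z}{p^k}$ is an integer multiple of $\frac{z}{p^k}$ and hence lies in the subgroup $S$, so $v \in S^n$.

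Next I would classify the subgroups of $\mathbb{Z}[\frac{1}{p}]$. Set $S_k := S \cap \frac{1}{p^k}\mathbb{Z}$, a subgroup of the cyclic group $\frac{1}{p^k}\mathbb{Z}$, so $S_k = \frac{a_k}{p^k}\mathbb{Z}$ for a unique $a_k \in \mathbb{Z}_{\geq 0}$. The chain $S_k \subset S_{k+1}$ forces $a_{k+1} \mid p a_k$. Writing $a_k = p^{c_k} d_k$ with $\gcd(d_k, p) = 1$, one extracts $d_{k+1} \mid d_k$ and $c_{k+1} \leq c_k + 1$, so $j_k := k - c_k$ is non-decreasing. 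If $S = 0$ the claim is trivial; otherwise $(d_k)$ is a non-increasing sequence of positive integers coprime to $p$, hence eventually constant equal to some $d \in \mathbb{Z}$, and two cases remain. If $(j_k)$ is bounded, then eventually $j_k = J$ and $S$ stabilizes to $\frac{d}{p^J}\mathbb{Z}$, which is $a\mathbb{Z}$ with $a = \frac{d}{p^J} \in \mathbb{Z}[\frac{1}{p}]$. If $j_k \to \infty$, then $S = \bigcup_k \frac{d}{p^{j_k}}\mathbb{Z} = d\mathbb{Z}[\frac{1}{p}]$, which is $b\mathbb{Z}[\frac{1}{p}]$ with $b = d \in \mathbb{Z}$.

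Combining the two steps produces the three possibilities for $G$ stated in the lemma. There is no real obstacle; the main fiddly part is the bookkeeping in the second step, notably the observation that the $p$-adic part of the generators may drift while the prime-to-$p$ part stabilizes and determines $S$ up to the choice between the bounded and unbounded denominator regimes.
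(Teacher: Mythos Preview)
Your argument is correct. Both steps go through as written: the reduction $G = S^n$ via Lemma~\ref{SLnZ} and signed permutations is clean, and the filtration $S_k = S \cap \tfrac{1}{p^k}\mathbb{Z}$ with the bookkeeping on $(d_k)$ and $(j_k)$ correctly yields the trichotomy for $S$.

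Your route differs from the paper's. The paper does not first reduce to one dimension; instead it splits directly on whether $G$ is finitely generated. In the finitely generated case it clears denominators with a common $p^l$ and applies Lemma~\ref{SLnZ} to the integer generators to get $G = (\tfrac{a}{p^l}\mathbb{Z})^n$ with $a$ the gcd of all entries. In the infinitely generated case it picks, for each $k$, an element of $G$ with denominator $p^{m_k}$, $m_k \geq k$, and minimal numerator $a_k$ coprime to $p$, then sets $b = \lim_k \gcd(a_1,\ldots,a_k)$ and verifies $G = (b\,\mathbb{Z}[\tfrac{1}{p}])^n$. Your approach is more structured: the reduction to $S^n$ isolates exactly where $\SLn$-invariance is used, and the remaining problem becomes the classical classification of subgroups of the rank-one group $\mathbb{Z}[\tfrac{1}{p}]$, which your filtration handles uniformly without a case split on finite generation. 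The paper's version is shorter on the page but leaves more to ``one verifies''; yours makes the mechanism more transparent at the cost of a little extra notation.
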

\begin{proof}
Suppose $G \neq \{0\}$. If $G$ is finitely generated, set $G= \langle a_1,\ldots,a_m\rangle$. Choose $l \in \mathbb{N}$ such that
$$a_i=(\frac{a_{i1}}{p^l},\ldots,\frac{a_{in}}{p^l})^T$$
with $a_{i,j} \in \mathbb{Z}$ for all $i,j$. By Lemma \ref{SLnZ}, $G= (\frac{a}{p^l} \mathbb{Z})^n$ with $a= \gcd_{i,j}(a_{ij})$.
Suppose $G$ is not finitely generated. Then, using Lemma \ref{SLnZ}, for each $k \in \mathbb{N}$ we find $m_k \geq k$ and $a_k \in \mathbb{Z}, p\nmid a_k$ such that $(\frac{a_k}{p^{m_k}},\ldots,\frac{a_k}{p^{m_k}})^T \in G$ and if $|a'| < |a_k|$ then $(\frac{a'}{p^{m_k}},\ldots,\frac{a'}{p^{m_k}})^T \not\in G$. Set $b_k = \gcd_{i\leq k}(a_i)$. Let $b= \lim_{k \to \infty} b_k$. One verifies that $G = (b\mathbb{Z}[\frac{1}{p}])^n$.
\end{proof}

\begin{theorem}\label{nontriv}
Let $G \curvearrowright X$ as in Notation \ref{notation}. Denote by $\mathcal{R}_2$ the equivalence relation associated with the Cartan subalgebra $\rL^\infty(\frac{(\mathbb{Z}_p^n)^\Gamma}{i(\mathbb{Z}_p^n)})\ovt \mathcal{L}(\mathbb{Z}^n)$ of $M$. Then $\mathcal{F}(\mathcal{R}_2) = \{p^{kn}\mid k \in \mathbb{Z}\}$.
\end{theorem}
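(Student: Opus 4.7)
The plan is to prove the two inclusions separately.

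For $\{p^{kn}\mid k\in\mathbb{Z}\}\subseteq\mathcal{F}(\mathcal{R}_2)$, I construct an explicit stable self orbit equivalence of $\mathcal{R}_2$ of compression constant $p^n$. Write $X_2 := \frac{(\mathbb{Z}_p^n)^\Gamma}{i(\mathbb{Z}_p^n)}$ and consider
\begin{equation*}
\phi : \mathbb{T}^n \times X_2 \recht \mathbb{T}^n \times X_2, \quad (x,y)\mapsto (px,y) \;.
\end{equation*}
Since $\widehat{\mathbb{Z}_p^n}\cong (\mathbb{Z}[\tfrac{1}{p}]/\mathbb{Z})^n$ is $p$-divisible, one has $p\cdot\widehat{\mathbb{Z}_p^n}=\widehat{\mathbb{Z}_p^n}$; the kernel $\tfrac{1}{p}\mathbb{Z}^n/\mathbb{Z}^n$ of the multiplication $p\cdot : \mathbb{T}^n \recht \mathbb{T}^n$ is contained in $\widehat{\mathbb{Z}_p^n}$; and multiplication by $p$ commutes with the $\SLn$-action on $\mathbb{T}^n$. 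Hence $(x_1,y_1)\sim_{\mathcal{R}_2}(x_2,y_2)$ if and only if $\phi(x_1,y_1)\sim_{\mathcal{R}_2}\phi(x_2,y_2)$. Restricting $\phi$ to $F\times X_2$, where $F=[0,\tfrac{1}{p})^n\subset \mathbb{T}^n$ of measure $p^{-n}$ is a fundamental domain for $p\cdot$, yields a stable self orbit equivalence of $\mathcal{R}_2$ with compression constant $p^n$. Iterating and inverting generates $\{p^{kn}\mid k \in \mathbb{Z}\}$ inside $\mathcal{F}(\mathcal{R}_2)$.

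For the reverse inclusion, let $c\in\mathcal{F}(\mathcal{R}_2)$ be realised by a stable OE $\Delta:Y_0\recht Y_0'$. I extend $\Delta$ to a local orbit-preserving isomorphism $\tilde{\Delta}:\mathbb{T}^n\times X_2\recht \mathbb{T}^n\times X_2$ via Remark \ref{remark-extend-SOE}, and form the associated 1-cocycle $\om:H\times(\mathbb{T}^n\times X_2)\recht H$ for the acting group $H:=\widehat{\mathbb{Z}_p^n}\rtimes\Gamma$. The key step is cocycle superrigidity applied to $\om|_\Gamma$, following the template of the proof of Theorem \ref{triv}. Although $\Gamma\actson X_2$ is only a quotient of the Bernoulli shift $\Gamma\actson (\mathbb{Z}_p^n)^\Gamma$ by the diagonal compact group $\mathbb{Z}_p^n$, Lemma \ref{cocyclesuperrigid} applies directly to the covering action $\mathbb{Z}_p^n\rtimes\Gamma\actson (\mathbb{Z}_p^n)^\Gamma$. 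Lifting $\om|_\Gamma$ through this covering and using that $\SLn\subset\Gamma$ is an infinite wq-normal rigid subgroup, one concludes, after composing $\tilde{\Delta}$ with suitable inner modifications coming from $\mathcal{R}_2$, that $\om(g,y)=\delta(g)$ for all $g\in\Gamma$ and a.e.\ $y$, with $\delta:\Gamma\recht H$ a group morphism. Lemma \ref{cocycle} together with the icc and mixing properties of $\Gamma\actson X_2$ then force the $\Gamma$-component of $\delta$ to be an automorphism of $\Gamma$.

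At this stage $\tilde{\Delta}$ decomposes modulo a $\delta$-conjugacy of $\Gamma\actson X_2$ into a measurable family of transformations of $\mathbb{T}^n$ which are $\SLn$-equivariant and send $\widehat{\mathbb{Z}_p^n}$-cosets to $\widehat{\mathbb{Z}_p^n}$-cosets. Under the identifications $\widehat{\mathbb{T}^n}\cong\mathbb{Z}^n$ and $\widehat{\mathbb{Z}_p^n}\cong \mathbb{Z}[\tfrac{1}{p}]^n/\mathbb{Z}^n$, this corresponds dually to an $\SLn$-invariant subgroup of $\mathbb{Z}[\tfrac{1}{p}]^n$ containing $\mathbb{Z}^n$. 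By Lemma \ref{subgroups} the only such proper subgroups are the $p^{-k}\mathbb{Z}^n$ with $k\in\N$, so the $\mathbb{T}^n$-factor of $\tilde{\Delta}$ is essentially scalar multiplication by a unit $\pm p^k$ of $\mathbb{Z}[\tfrac{1}{p}]$, giving $c=p^{kn}$ for some $k\in\mathbb{Z}$. The main obstacle is the cocycle superrigidity step: since $\Gamma\actson X_2$ is not Bernoulli but only a quotient, one must transfer cocycle superrigidity down from $\mathbb{Z}_p^n\rtimes\Gamma\actson (\mathbb{Z}_p^n)^\Gamma$ while controlling the ambiguity coming from the $\mathbb{Z}_p^n$-quotient, analogous to the endgame of the proof of Theorem \ref{triv}; once this is achieved, Lemma \ref{subgroups} yields the claimed value of $\mathcal{F}(\mathcal{R}_2)$.
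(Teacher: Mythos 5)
Your easy inclusion $\{p^{kn}\mid k\in\Z\}\subseteq\mathcal{F}(\mathcal{R}_2)$ is correct and is essentially the same construction the paper gives (the paper realizes it as the map $\Psi(x,y)=(px,y)$ on $\tilde X/N_1\to\tilde X/N$ with $N_1=\tfrac{1}{p}\Z^n\times\Z_p^n$).

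For the hard inclusion there is a genuine gap, precisely at what you yourself flag as the main obstacle. The Zimmer cocycle $\omega$ you introduce is defined on $\Gamma\times(\T^n\times X_2)$, and to conclude that $\omega|_\Gamma$ is cohomologous to a homomorphism you would need cocycle superrigidity for the action $\Gamma\actson\T^n\times X_2$, not for the Bernoulli action $\Z_p^n\rtimes\Gamma\actson(\Z_p^n)^\Gamma$. Lemma \ref{cocyclesuperrigid} only covers the Bernoulli part. Concretely, a cocycle $\omega:\Gamma\times(\T^n\times X_2)\to H$ can depend nontrivially on the $\T^n$-coordinate, and nothing in the Bernoulli statement controls that dependence; the $\T^n$-factor carries the linear $\SLn$-action, which is not Bernoulli and for which superrigidity must be established separately. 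This is exactly what Step 1 of the paper's proof does: it passes to the covering $\tilde G=(\Z[\tfrac1p]^n\times\Z_p^n)\rtimes\Gamma\actson\tilde X=\R^n\times(\Z_p^n)^\Gamma$, proves s-malleability of $\SLn\actson\tilde X$, property (T) of the diagonal $\SLn\actson\tilde X^2$, ergodicity of $\SLn\actson\tilde X^4$, invokes \cite[Theorem 5.3]{PV08b}, and then extends step by step to $\tilde G$ using Lemma \ref{cocycle} and proper ergodicity of the stabilizer actions $H_i\actson(\R^n)^2$. None of this is supplied or replaced by your argument; ``lifting $\omega|_\Gamma$ through the covering $\Z_p^n\rtimes\Gamma\actson(\Z_p^n)^\Gamma$'' does not touch the $\T^n$-coordinate at all.

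The endgame is also sketched in a way that does not close. The claim that ``$\tilde\Delta$ decomposes into a measurable family of $\SLn$-equivariant transformations of $\T^n$ which send $\widehat{\Z_p^n}$-cosets to $\widehat{\Z_p^n}$-cosets'' is problematic because $\widehat{\Z_p^n}$ is dense in $\T^n$, so such cosets are individually null, and there is no direct measurable meaning attached to the statement; you have not explained how to extract an $\SLn$-invariant subgroup of $\Z[\tfrac1p]^n$ from it, nor why the candidate $(b\Z[\tfrac1p])^n$ from Lemma \ref{subgroups} is excluded. The paper instead applies \cite[Lemma 5.10]{PV08b} to produce a new open normal subgroup $N_1\lhd\tilde G$ with $N_1\actson\tilde X$ proper and of finite covolume, shows $N_1\lhd(\Z[\tfrac1p]\times\Z_p)^n$ by an icc/properness argument, and only then uses Lemma \ref{subgroups}, with properness ruling out $(b\Z[\tfrac1p])^n$ and finite covolume ruling out $\{0\}$. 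Without the cocycle superrigidity of Step 1 in hand you cannot invoke \cite[Lemma 5.10]{PV08b} at all, so the reverse inclusion remains unproven.
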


\begin{proof}
It follows from Theorem \ref{twocart} that $\mathcal{R}_2$ is the orbit equivalence relation of the action $\widehat{\mathbb{Z}_p^n} \rtimes \Gamma \curvearrowright \mathbb{T}^n \times \frac{(\mathbb{Z}_p^n)^\Gamma}{i(\mathbb{Z}_p^n)}$. We will compute its fundamental group using \cite[Lemma 5.10]{PV08b}. Therefore we write the action as a quotient action $\frac{\tilde{G}}{N} \curvearrowright \frac{\tilde{X}}{N}$.

Recall that $q:\Gamma \to \SLn$ denotes the quotient map. Let $\Gamma$ act on $\mathbb{Z}[\frac{1}{p}]^n$ and on $\mathbb{Z}_p^n$ through $q$. Denote
$$\tilde{G} = \left(\mathbb{Z}[\frac{1}{p}]^n \times \mathbb{Z}_p^n\right) \rtimes \Gamma\quad \mbox{and}\quad \tilde{X}= \mathbb{R}^n \times (\mathbb{Z}_p^n)^\Gamma \;.$$
Define $i:\mathbb{Z}_p^n \to (\mathbb{Z}_p^n)^\Gamma:z \mapsto (g\cdot z)_{g \in \Gamma}$. Let $\tilde{G} \curvearrowright \tilde{X}$ where $\mathbb{Z}[\frac{1}{p}]^n \times \mathbb{Z}_p^n$ acts by translation after embedding by $i$ and $\Gamma$ acts on $\mathbb{R}^n$ through $q$ and on $(\mathbb{Z}_p^n)^\Gamma$ by Bernoulli shift.
Set $N = \mathbb{Z}^n \times \mathbb{Z}_p^n$. Then $N \lhd \tilde{G}$ is an open normal subgroup and the restricted action $N \curvearrowright \tilde{X}$ is proper. Observe that $\left(\frac{\tilde{G}}{N} \curvearrowright \frac{\tilde{X}}{N}\right) = \left(\widehat{\mathbb{Z}}_p^n \rtimes \Gamma \curvearrowright \mathbb{T}^n \times \frac{(\mathbb{Z}_p^n)^\Gamma}{i(\mathbb{Z}_p^n)}\right)$.
We will prove Theorem \ref{nontriv} in two steps.

\textbf{Step 1:} $\tilde{G} \curvearrowright \tilde{X}$ is $\mathcal{U}_{fin}$-cocycle superrigid.

We first prove that $\SLn \curvearrowright \tilde{X}$ is $\mathcal{U}_{fin}$-cocycle superrigid, using \cite[Theorem 5.3]{PV08b}. So we need to prove that $\SLn \curvearrowright \tilde{X}$ is s-malleable, that the diagonal action $\SLn \curvearrowright \tilde{X}^2$ has property (T) and that the 4-fold diagonal action $\SLn \curvearrowright \tilde{X}^4$ is ergodic.

As $\SLn \curvearrowright (\mathbb{Z}_p^n)^\Gamma$ is a Bernoulli action with diffuse base space, it is s-malleable (see \cite[\S 1.6]{Po03}). Also $\SLn \curvearrowright \mathbb{R}^n$
is s-malleable (see the proof of \cite[Theorem 1.3]{PV08b}), hence so is $\SLn \curvearrowright \tilde{X}$.

Next we prove that $\SLn \curvearrowright \tilde{X}^2$ has property (T). This is very similar to the proof of \cite[Lemma 5.6]{PV08b}. Denote by $(e_i)_{i=1,\ldots,n}$ the standard basis vectors in $\mathbb{R}^n$. The orbit of $(e_1,e_2)\in \mathbb{R}^n\times \mathbb{R}^n$ under the diagonal $\SLnR$-action has complement of measure zero. Hence we can identify $\SLn \curvearrowright \tilde{X}^2$ with $\SLn \curvearrowright \SLnR/ H_0 \times X \times X$ where $H_0= \mathrm{Stab}_{\SLnR}(e_1,e_2)$. Now by \cite[Proposition 3.5]{PV08b} $\SLn \curvearrowright \tilde{X}^2$ has property (T) if and only if $H_0 \curvearrowright \SLn \backslash \SLnR \times X \times X$ has property (T). But $\SLn \backslash \SLnR \times X \times X$ is a probability space, and as $H_0$ has property (T), so does the action $H_0 \curvearrowright \SLn \backslash \SLnR \times X \times X$ by \cite[Proposition 3.2]{PV08b}.

We still need to show that $\SLn \curvearrowright \tilde{X}^4$ is ergodic. From \cite[Lemma 5.6]{PV08b} we know that $\SLn \curvearrowright (\mathbb{R}^n)^4$ is ergodic. In fact this action is properly ergodic. Now $\SLn \curvearrowright (\mathbb{Z}_p^n)^\Gamma$ is mixing and hence mildly mixing. It follows from Proposition \ref{mildly} that $\SLn \curvearrowright \tilde{X}^4$ is ergodic.

So $\SLn \curvearrowright \mathbb{R}^n \times (\mathbb{Z}_p^n)^\Gamma$ is $\mathcal{U}_{fin}$-cocycle superrigid.

Now let $\omega: \left((\mathbb{Z}[\frac{1}{p}] \times \mathbb{Z}_p)^n\rtimes \Gamma\right) \times (\mathbb{R}^n \times (\mathbb{Z}_p^n)^\Gamma) \to \mathcal{G}$ be a $1$-cocycle for the action $\tilde{G}\curvearrowright \tilde{X}$ with values in a Polish group of finite type. By the previous paragraphs, we may assume that $\omega_{\mid\SLn}$ is a group morphism. Let $a \in \mathbb{Z}[\frac{1}{p}]$, $b \in \mathbb{Z}_p$ be any two elements. Write
\begin{equation*}
(a,b)_i = \big(((0,0),\ldots,(0,0),(a,b),(0,0),\ldots,(0,0))^T,e \big)\in (\mathbb{Z}[\frac{1}{p}] \times \mathbb{Z}_p)^n \rtimes \Gamma
\end{equation*}
where we write $(a,b)$ in the $i^{th}$ row. Then $\SLn \cap (a,b)_i \SLn (a,b)_i^{-1} = H_i$, where $H_i$ is the group of matrices in $\SLn$ that leave all vectors $(a,b)_i$ invariant. One easily sees that this means the $i^{th}$ column of the matrix has to be the $i^{th}$ unit vector. Note that $H_i$ is isomorphic to $\mathrm{SL}_{n-1}(\mathbb{Z}) \ltimes \mathbb{Z}^{n-1}$.

By Lemma \ref{cocycle} it now suffices to prove that $H_i \curvearrowright \tilde{X}^2$ is ergodic for all $i$ to get that $\omega$ is a group morphism on $(\mathbb{Z}[\frac{1}{p}]^n \times \mathbb{Z}_p^n) \rtimes \mathrm{SL}_{n}(\mathbb{Z})$. But as $H_i$ acts mixingly on $(\mathbb{Z}_p^n)^\Gamma$, by Proposition \ref{mildly} it suffices to prove that $H_i \curvearrowright (\mathbb{R}^n)^2$ is properly ergodic.

We show that $H_1 \curvearrowright (\mathbb{R}^n)^2$ is properly ergodic. Let $F: \mathbb{R}^n \times \mathbb{R}^n \to \mathbb{R}$ be an $H_1$-invariant function. For all $x_1, x_{n+1} \in \mathbb{R}$, the map $F_{x_1,x_{n+1}}: \mathbb{R}^{n-1} \times \mathbb{R}^{n-1} \to \mathbb{R}: (x_2,\ldots,x_n, x_{n+2},\ldots,x_{2n}) \mapsto F(x_1,\ldots,x_{2n})$ is $\mathrm{SL}_{n-1}(\mathbb{Z})$-invariant. By ergodicity of the diagonal action $\mathrm{SL}_{n-1}(\mathbb{Z}) \curvearrowright (\mathbb{R}^{n-1})^2$ (see \cite[Lemma 5.6]{PV08b}) we find that $F_{x_1, x_{n+1}}$ is essentially constant for all $x_1, x_{n+1} \in \mathbb{R}$, say $c_{x_1,x_{n+1}}$. Set $E: \mathbb{R}^2 \to \mathbb{R}: (x_1,x_{n+1}) \mapsto c_{x_1, x_{n+1}}$. Then for almost all $(x_1,x_{n+1}) \in \mathbb{R}^2$, for almost all $(x_2,\ldots,x_n, x_{n+2},\ldots,x_{2n}) \in \mathbb{R}^{n-1}\times \mathbb{R}^{n-1}$ and for all $a_2,\ldots,a_n \in \mathbb{Z}$ we have
\begin{eqnarray*}
& & E(x_1,x_{n+1})\\ & = & F(x_1,\ldots,x_{2n})\\
 & = & F(x_1 +a_2 x_2 +\ldots+a_nx_n,x_2,\ldots,x_n, x_{n+1} + a_{2}x_{n+2} + \ldots + a_{n}x_{2n}, x_{n+2},\ldots,x_{2n})\\
 & = & E(x_1+a_2 x_2 +\ldots+a_nx_n, x_{n+1} + a_{2}x_{n+2} + \ldots + a_{n}x_{2n}) \;.
\end{eqnarray*}
Hence $E$ is essentially constant. It follows that $F$ is essentially constant. Furthermore it is clear that any $H_1$-orbit is negligible. So $H_1 \curvearrowright (\mathbb{R}^n)^2$ is properly ergodic. The same reasoning holds for all $i$, so that $\omega$ is a group morphism on $(\mathbb{Z}[\frac{1}{p}]^n \times \mathbb{Z}_p^n) \rtimes \SLn$.

In particular $\omega$ is a group morphism on $(\mathbb{Z}[\frac{1}{p}]^n \times \mathbb{Z}_p^n) \rtimes \Sigma$ and this commutes with $\Lambda$. To conclude that $(\mathbb{Z}[\frac{1}{p}]^n \times \mathbb{Z}_p^n) \rtimes \Gamma \curvearrowright \mathbb{R}^n \times (\mathbb{Z}_p^n)^\Gamma$ is $\mathcal{U}_{fin}$-cocycle superrigid, by Lemma \ref{cocycle} it suffices to prove that the diagonal action $(\mathbb{Z}[\frac{1}{p}]^n \times \mathbb{Z}_p^n) \rtimes \Sigma \curvearrowright \mathbb{R}^n \times \mathbb{R}^n \times ((\mathbb{Z}_p^n)^\Gamma)^{2}$ is ergodic. Let $F: \mathbb{R}^n \times \mathbb{R}^n \times ((\mathbb{Z}_p^n)^\Gamma)^{2} \to \mathbb{R}$ be a $(\mathbb{Z}[\frac{1}{p}]^n \times \mathbb{Z}_p^n) \rtimes \Sigma$-invariant function. As $\mathbb{Z}[\frac{1}{p}]^n$ is dense in $\mathbb{R}^n$, $F$ satisfies $F(x,y,z) = H(x-y,z)$ where $H: \mathbb{R}^n \times ((\mathbb{Z}_p^n)^\Gamma)^{2} \to \mathbb{R}$ is $\mathbb{Z}_p^n \rtimes \Sigma$-invariant.

So $H$ is $\Sigma$-invariant. As $\Sigma$ acts on $((\mathbb{Z}_p^n)^\Gamma)^2$ mildly mixingly, it suffices to see that $\Sigma \curvearrowright \mathbb{R}^n$ is ergodic. But this is true, as $\mathrm{SL}_2(\mathbb{Z}) \curvearrowright \mathbb{R}^2$ is ergodic.

So we have shown that $(\mathbb{Z}[\frac{1}{p}]^n \times \mathbb{Z}_p^n)\rtimes \Gamma \curvearrowright \mathbb{R}^n \times (\mathbb{Z}_p^n)^\Gamma$ is $\mathcal{U}_{fin}$-cocycle superrigid.

\textbf{Step 2:} $\mathcal{F}(\mathcal{R}_2) = \{ p^{kn} \mid k \in \mathbb{Z} \}$.

To prove this we use \cite[Lemma 5.10]{PV08b}.
Let $\tilde{\mu}$ be the canonical measure on $\tilde{X}$ and fix a Haar measure $\lambda$ on $\tilde{G}$. Remark that the action $\tilde{G} \curvearrowright (\tilde{X},\tilde{\mu})$ is essentially free, ergodic and measure preserving. 

Note that for any open normal subgroup $N_0$ of $\tilde{G}$ for which the restricted action $N_0 \actson \tilde{X}$ is essentially free and proper, there exists a canonical (not necessarily finite) measure $\mu_{N_0}$ on $\frac{\tilde{X}}{N_0}$ such that $\frac{\tilde{G}}{N_0} \curvearrowright (\frac{\tilde{X}}{N_0}, \mu_{N_0})$ is ergodic and measure preserving. Indeed, since $N_0$ acts essentially freely and properly on $\tilde{X}$, we can write  $\tilde{X}$ as $N_0 \times \frac{\tilde{X}}{N_0}$ and under this bijection $\tilde{\mu}$ corresponds to $\lambda_{|N_0} \times \mu_{N_0}$ . In the rest of the proof we will make use of these (not necessarily normalized) measures.

Recall that $N=\mathbb{Z}^n \times \mathbb{Z}_p^n$ is an open normal subgroup of $\tilde{G}$ and that the restricted action $N \actson \tilde{X}$ is essentially free and proper. Remark that $\mu_N$ is a finite measure.

Let $\Delta: \frac{\tilde{X}}{N} \to \frac{\tilde{X}}{N}$ be a stable orbit equivalence between
$\frac{\tilde{G}}{N} \stackrel{\alpha}{\curvearrowright} \frac{\tilde{X}}{N}$ and itself. By \cite[Lemma 5.10]{PV08b}, there exists
\begin{itemize}
\item a subgroup $\Lambda_0 < \frac{\tilde{G}}{N}$ and a non-negligible subset $Y_0 \subset \frac{\tilde{X}}{N}$, such that $\alpha$ is induced from $\Lambda_0 \curvearrowright Y_0$,
\item an open normal subgroup $N_1 \lhd \tilde{G}$ such that the restricted action $N_1 \curvearrowright \tilde{X}$ is proper,
\end{itemize}
such that $\frac{\tilde{G}}{N_1} \curvearrowright \frac{\tilde{X}}{N_1}$ and $\Lambda_0 \curvearrowright Y_0$ are conjugate through a non-singular isomorphism and a group isomorphism. But $\frac{\tilde{G}}{N} \curvearrowright \frac{\tilde{X}}{N}$ is weakly mixing and hence it is not an induced action.

So we find $N_1 \lhd \tilde{G}$ open, such that $N_1 \curvearrowright \tilde{X}$ is proper and such that $\frac{\tilde{G}}{N_1} \curvearrowright \frac{\tilde{X}}{N_1}$ and $\frac{\tilde{G}}{N}\curvearrowright \frac{\tilde{X}}{N}$ are conjugate through the non-singular isomorphism $\Psi: \frac{\tilde{X}}{N_1} \to \frac{\tilde{X}}{N}$ and the group isomorphism $\delta: \frac{\tilde{G}}{N_1} \to \frac{\tilde{G}}{N}$. Furthermore we have 
\begin{equation}\label{comprconst}
\Delta(N\cdot x) \in \frac{\tilde{G}}{N} \cdot \Psi(N_1\cdot x) \mbox{ for almost all } x \in \tilde{X} \; .
\end{equation}
It follows from the ergodicity of the actions $\frac{\tilde{G}}{N_1} \curvearrowright \frac{\tilde{X}}{N_1}$ and $\frac{\tilde{G}}{N}\curvearrowright \frac{\tilde{X}}{N}$ that $\Psi$ is measure scaling and that $\mu_{N_1}$ is a finite measure. The compression constant $c(\Psi)$ equals $\frac{\mu_N(\tilde{X}/N)}{\mu_{N_1}(\tilde{X}/N_1)}.$ 
By (\ref{comprconst}) the compression constant of $\Delta$ is the product of $c(\Psi)$ and the compression constant of the canonical stable orbit equivalence between $\frac{\tilde{G}}{N}\curvearrowright \frac{\tilde{X}}{N}$ and $\frac{\tilde{G}}{N_1}\curvearrowright \frac{\tilde{X}}{N_1}$. The latter is $1$ (with respect to the non-normalized measures $\mu_N$ and $\mu_{N_1}$). Hence $c(\Delta)=c(\Psi) = \frac{\mu_N(\tilde{X}/N)}{\mu_{N_1}(\tilde{X}/N_1)}$.

Set $P:=(\mathbb{Z}[\frac{1}{p}]\times \mathbb{Z}_p)^n$. We show that $N_1 \lhd P$. Let $q: \Gamma \to \SLn$ be the quotient map as in Notation \ref{notation} and write $\Gamma_0 = \ker(q)$. Then $\Gamma_0 \curvearrowright \tilde{X}$ is given by $\mathrm{id} \times \Gamma_0 \curvearrowright \mathbb{R}^n \times (\mathbb{Z}_p^n)^\Gamma$ where $\Gamma_0$ acts on $(\mathbb{Z}_p^n)^\Gamma$ by Bernoulli shift. Now consider the subset $[0,1]^n\times (\mathbb{Z}_p^n)^\Gamma \subset \tilde{X}$. This set is globally invariant under $N_1\cap \Gamma_0$ and $N_1\cap \Gamma_0 \curvearrowright [0,1]^n\times (\mathbb{Z}_p^n)^\Gamma$ is still essentially free and proper. So there exists a measurable map $\pi: [0,1]^n\times (\mathbb{Z}_p^n)^\Gamma \rightarrow N_1\cap \Gamma_0$ such that $\pi(g \cdot x) = g \pi(x)$ for almost all $(g,x) \in N_1\cap \Gamma_0 \times ([0,1]^n\times (\mathbb{Z}_p^n)^\Gamma)$. But this implies that $N_1 \cap \Gamma_0$ is compact, as the pushforward of the invariant probability measure on $[0,1]^n\
times (\mathbb{Z}_p^n)^\Gamma$ is an invariant probability measure on $N_1 \cap \Gamma_0$. So $N_1 \cap \Gamma_0$ is finite. Being an icc group, $\Gamma$ has no non-trivial finite normal subgroups. As $N_1 \cap \Gamma_0$ is normal in $\Gamma$, it has to be trivial.

Now take $xg \in N_1$ where $x \in P, g \in \Gamma$. Then for all $\lambda \in \Gamma_0$ we have
$$N_1 \ni (xg)^{-1} \lambda (xg) \lambda^{-1} = g^{-1}x^{-1}\lambda x g \lambda^{-1} = g^{-1} \lambda g \lambda^{-1} \in \Gamma_0 \;.$$
So $\lambda g \lambda^{-1}=g$ for all $\lambda \in \Gamma_0$, hence $g=e$.
We have found that $N_1 \lhd P$.

As $N_1$ is open, there exists $l \in \mathbb{N}$ such that $(\{0\}  \times p^l\mathbb{Z}_p)^n < N_1$. Since $(\mathbb{Z}[\frac{1}{p}] \times p^l \mathbb{Z}_p)^n < P$ has finite index, also $N_2:= N_1 \cap (\mathbb{Z}[\frac{1}{p}] \times p^l \mathbb{Z}_p)^n < N_1$ has finite index. It is clear that $N_2 = N_3 \times (p^l \mathbb{Z}_p)^n$ for some $N_3 < \mathbb{Z}[\frac{1}{p}]^n$. Furthermore $N_3$ is $\SLn$-invariant as $N_2$ is normal in $\tilde{G}$.

By Lemma \ref{subgroups}, $N_3$ is either $\{0\}, (a\mathbb{Z})^n$ for some $a \in \mathbb{Z}[\frac{1}{p}]$ or $(b\mathbb{Z}[\frac{1}{p}])^n$ for some $b \in \mathbb{Z}$. Remark that $N_2 \curvearrowright \tilde{X}$ is proper. Hence $N_3 \neq (b\mathbb{Z}[\frac{1}{p}])^n$ for $b \in \mathbb{Z}$. As $N_2$ has finite index in $N_1$ and $N_1 \curvearrowright \tilde{X}$ has finite covolume, also $N_2 \curvearrowright \tilde{X}$ has finite covolume. So $N_3 \neq\{0\}$. We conclude that $N_3=(a\mathbb{Z})^n$ for some $a \in \mathbb{Z}[\frac{1}{p}]$ and $N_2 = (a\mathbb{Z} \times p^l \mathbb{Z}_p)^n$.

In particular we found $N_2 < N_1$ such that $N_2$ and $N=\mathbb{Z}^n \times \mathbb{Z}_p^n$ are commensurate. But as $N_2$ has finite index in $N_1$, this implies that $N_1$ and $N$ are commensurate. It follows that $\frac{\mu_N(\tilde{X}/N)}{\mu_{N_1}(\tilde{X}/N_1)} = \frac{[N_1 : N \cap N_1]}{[N : N \cap N_1]}$. So $c(\Delta) = \frac{[N_1 : N \cap N_1]}{[N : N \cap N_1]}$.

Set \begin{equation*}
p_1 : N \hookrightarrow P \twoheadrightarrow \frac{P}{N_1} \hookrightarrow \frac{\tilde{G}}{N_1} \cong \frac{\tilde{G}}{N}  \quad \;\text{and} \; \quad
p_2 : N_1 \hookrightarrow P \twoheadrightarrow \frac{P}{N} \hookrightarrow \frac{\tilde{G}}{N} \;.
\end{equation*}
As $N$ and $N_1$ are commensurate, both $\operatorname{Im}(p_1)$ and $\operatorname{Im}(p_2)$ are finite abelian normal subgroups of $\frac{\tilde{G}}{N}$. But then $p_\Gamma(\operatorname{Im}(p_i))=\{e\}$ for $i=1,2$, so $\operatorname{Im}(p_i)$ is a finite subgroup of a $p$-group. One verifies that $|\operatorname{Im}(p_i)|$ is a power of $p^n$, say $|\operatorname{Im}(p_i)|=p^{nl_i}$ for $i=1,2$, $l_i \in \mathbb{Z}$.
So we see that
\begin{eqnarray*}
p^{nl_1}&=&|\operatorname{Im}(p_1)|=[N : \ker(p_1)] = [N : N \cap N_1] \;,\\
p^{nl_2}&=&|\operatorname{Im}(p_2)|=[N_1 : \ker(p_2)] = [N_1 : N \cap N_1] \;.
\end{eqnarray*}

It follows that $c(\Delta) = \frac{[N_1 : N \cap N_1]}{[N : N \cap N_1]} = p^{kn}$ for some $k \in \mathbb{Z}$. Since $\mathcal{F}(\mathcal{R}_2)$ is the group generated by the compression constants of stable orbit equivalences between $\mathcal{R}_2$ and itself, we find that $\mathcal{F}(\mathcal{R}_2) \subset \{ p^{kn} \mid k \in \mathbb{Z} \}$.

To prove the other inclusion remark that $N_1=\frac{1}{p}\mathbb{Z}^n \times \mathbb{Z}_p^n \lhd \mathbb{Z}[\frac{1}{p}]^n \times \mathbb{Z}_p^n$ is open and that the restricted action $N_1 \curvearrowright \tilde{X}$ is proper. Define the group isomorphism $\delta: \frac{\tilde{G}}{N_1} \to \frac{\tilde{G}}{N}$ by $\delta(z,s,g) = (pz,s,g)$. Set $\Psi: \frac{\tilde{X}}{N_1} \to \frac{\tilde{X}}{N}: \Psi(x,y) = (px,y)$. It is clear that this defines a measure preserving isomorphism. Then composition of $\Psi$ with the canonical stable orbit equivalence between $\frac{\tilde{G}}{N} \curvearrowright \frac{\tilde{X}}{N}$ and $\frac{\tilde{G}}{N_1} \curvearrowright \frac{\tilde{X}}{N_1}$ defines a self stable orbit equivalence of $\frac{\tilde{G}}{N} \curvearrowright \frac{\tilde{X}}{N}$ with compression constant $p^n$. So $\mathcal{F}(\mathcal{R}_2) = \{ p^{kn} \mid k \in \mathbb{Z} \}$.
\end{proof}

\section{A cocycle superrigidity theorem}\label{theorem.cocyclesuperrigidity}

We prove the following twisted version of Popa's cocycle superrigidity theorem \cite[Theorem 5.5]{Po05} for Bernoulli actions of groups admitting an infinite rigid subgroup that is wq-normal (see the discussion preceding Lemma \ref{cocyclesuperrigid} for the terminology).

The theorem is an adapted version of \cite[Theorem 11]{SV11} for generalized 1-cocycles.

\begin{definition}
Let $(\si_g)_{g \in \Gamma}$ be an action of a countable group $\Gamma$ on a von Neumann algebra $N$. Let $q \in N$ be a projection. A generalized 1-cocycle for the action $(\si_g)_{g \in \Gamma}$ on $N$ with support $q$ is a family of partial isometries $(\gamma_g)_{g \in \Gamma}$ in $q N \si_g(q)$, satisfying $\gamma_g \gamma_g^* = q, \gamma_g^* \gamma_g = \si_g(q)$ and $\gamma_{gh} = \gamma_g \, \si_g(\gamma_h)$ for all $g,h \in \Gamma$.
\end{definition}

For any von Neumann algebra $M$ we denote by $M^\infty$ the von Neumann algebra $M^\infty:=\B(\ell^2(\N)) \ovt M$.

\begin{theorem}\label{thm.cocycle}
Let $K$ be a compact group with countable subgroup $Z < K$. Let $\Gamma$ be a countable group. Assume that $\Gamma$ admits an infinite rigid subgroup that is wq-normal in $\Gamma$. Put $X = K^\Gamma$. Assume that $\Gamma$ acts on $K$ by continuous group automorphisms $(\al_g)_{g \in \Gamma}$ preserving $Z$. Embed $Z$ in $X$ by $i: Z \rightarrow X: z \mapsto (\alpha_g(z))_{g \in \Gamma}$. Put
$$N := \rL^\infty(X) \rtimes Z$$
where $Z$ acts on $X$ by translation after embedding by $i$. Denote by $(\si_g)_{g \in \Gamma}$ the action of $\Gamma$ on $N$ such that $\si_g$ is the Bernoulli shift on $\rL^\infty(X)$ and is given by $\alpha_g$ on $\mcL Z$. Let $p \in N$ be a non-zero projection.

\begin{itemize}
\item Assume that $q \in (\mcL Z)^\infty$ is a projection and that $(\gamma_g)_{g \in \Gamma}$ is a generalized 1-cocycle for the action of $\Gamma$ on $(\mcL Z)^\infty$ with support $q$. Assume that $v \in \B(\C,\ell^2(\N)) \ovt N$ is a partial isometry satisfying $v^* v = p$ and $v v^* = q$. Then the formula
\begin{equation}\label{eq.gen-cocycle}
    \om_g := v^* \, \gamma_g \, \si_g(v)
\end{equation}
defines a generalized 1-cocycle $(\omega_g)_{g \in \Gamma}$ for the action $(\si_g)_{g \in \Gamma}$ on $N$ with support $p$.

\item Conversely, every generalized $1$-cocycle for the action $(\si_g)_{g \in \Gamma}$ on $N$ is of the above form with $\gamma$ being uniquely determined in the following sense: if $(\gamma_g)_g$ and $(\vphi_g)_g$ both satisfy (\ref{eq.gen-cocycle}), then there is a unitary $u \in (\mcL Z)^\infty$ such that $\vphi_g = u \gamma_g \sigma_g(u^*)$ for all $g \in \Gamma$.
\end{itemize}
\end{theorem}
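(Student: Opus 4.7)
The forward direction is a direct verification. Given $(q, v, \gamma)$ as in the statement, set $\omega_g = v^* \gamma_g \sigma_g(v)$. Using $v^* v = p$, $v v^* = q$, $\gamma_g \gamma_g^* = q$, $\gamma_g^* \gamma_g = \sigma_g(q)$, together with the identities $q \gamma_g = \gamma_g = \gamma_g \sigma_g(q)$, one immediately checks the three defining properties of a generalized 1-cocycle with support $p$: $\omega_g \omega_g^* = v^* \gamma_g \sigma_g(vv^*) \gamma_g^* v = v^* \gamma_g \sigma_g(q) \gamma_g^* v = v^* q v = p$, and similarly $\omega_g^* \omega_g = \sigma_g(p)$; the cocycle identity $\omega_{gh} = \omega_g \sigma_g(\omega_h)$ then follows from $\gamma_{gh} = \gamma_g \sigma_g(\gamma_h)$ together with $\sigma_g(q) \sigma_g(\gamma_h) = \sigma_g(\gamma_h)$.

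For the converse I follow the strategy of \cite[Theorem 11]{SV11}, adapted to generalized (partial-isometry valued) 1-cocycles. The key inputs are the $s$-malleability of the Bernoulli shift $\Gamma \actson L^\infty(K^\Gamma)$ (with its diffuse base) and the existence of an infinite rigid subgroup $H < \Gamma$ that is wq-normal. Given a generalized 1-cocycle $(\omega_g)$ on $N$ with support $p$, first amplify: viewing $\omega$ in $N^\infty$ and using the freedom in choosing partial isometries with prescribed support traces, one reduces to studying $\omega$ in sufficient "room". Next apply the $s$-malleable deformation $(\theta_t)$ of the Bernoulli part to the cocycle, producing $\omega_g^{(t)} := \theta_t(\omega_g)$. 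Since $(\theta_t)$ commutes with $\sigma$, this is again (a path of) generalized 1-cocycles, and the rigidity of $H$ forces the convergence $\omega_h^{(t)} \to \omega_h$ to be uniform over $h \in H$ as $t \to 0$. Combined with the $s$-malleability (concretely, the involution $\beta$ and the "transversality" of $\theta_t$), a standard Popa-type "transfer-of-rigidity" argument shows that $\omega_h$ for $h \in H$ can be untwisted into the subalgebra $(\mcL Z)^\infty$: there exist a projection $q \in (\mcL Z)^\infty$, a partial isometry $v \in \B(\C,\ell^2(\N)) \ovt N$ with $v^*v = p, vv^* = q$, and partial isometries $\gamma_h \in q (\mcL Z)^\infty \sigma_h(q)$ such that $\omega_h = v^* \gamma_h \sigma_h(v)$ for all $h \in H$.

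Having untwisted on $H$, use wq-normality to propagate to all of $\Gamma$: along the chain $H = H_0 \subset H_1 \subset \cdots \subset \Gamma$ exhausting $\Gamma$, each extension step uses the fact that $|g H_{n-1} g^{-1} \cap H_{n-1}| = \infty$ together with an ergodicity/intertwining argument (analogous to Lemma \ref{cocycle}) to extend the partial isometries $\gamma_g$ from $H_{n-1}$ to $H_n$, while keeping $v$ fixed. This yields the full data $(q, v, \gamma)$ on $\Gamma$. For the uniqueness clause, if $(\gamma_g, v)$ and $(\vphi_g, v')$ both satisfy (\ref{eq.gen-cocycle}) for the same $\omega$, set $u := v' v^*$; a direct computation using the support relations shows $u$ is a partial isometry in $(\mcL Z)^\infty$ with $u^*u = q, uu^* = q'$, and $\vphi_g = u \gamma_g \sigma_g(u^*)$; after padding with a suitable partial isometry in the commutant one obtains a unitary in $(\mcL Z)^\infty$ as required. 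The main obstacle is the second paragraph: carrying out Popa's deformation-rigidity argument for generalized (non-unitary) 1-cocycles valued in the crossed product $N$ itself, and confirming that the untwisted cocycle lands precisely in $(\mcL Z)^\infty$ rather than in some larger piece of $N$, since only the Bernoulli part of $\sigma$ is "killed" by the superrigidity.
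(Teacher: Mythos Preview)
Your forward direction is fine and matches the paper. For the converse and the uniqueness, your outline diverges from the paper's argument and leaves two genuine gaps.

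First, the paper does \emph{not} rerun Popa's deformation--rigidity machinery for partial-isometry-valued cocycles. Instead it reduces to the unitary case and invokes \cite[Theorem 5.5]{Po05} as a black box. Concretely: after arranging $p \in \mcL Z$, the paper enlarges $N$ to $\cN = \rL^\infty(X \times K) \rtimes Z$, tensors with the hyperfinite II$_1$ factor $R$, and chooses $u \in P \ovt R$ (where $P = \rL^\infty(K) \rtimes Z$) with $uu^* = 1 \ot r$, $u^* u = p \ot 1$ so that $\nu_g := u\,\omega_g\,(\si_g \ot \id)(u^*)$ is an honest \emph{unitary} 1-cocycle. A $\Gamma$-equivariant isomorphism $\Phi : \rL^\infty(X) \ovt P \to \cN$ then exhibits $(\si_g)$ as (Bernoulli)$\ot$(something on $P$), so Popa's theorem applies directly and yields $\nu_g$ cohomologous to a cocycle $\delta_g \in \cU(P \ovt rRr)$. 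Your plan to apply the malleable deformation $\theta_t$ directly to $\omega_g \in N$ is not well-posed as stated: $\theta_t$ lives on $\rL^\infty(X \times X)$, not on $N = \rL^\infty(X) \rtimes Z$, and extending it to act on $N$ (or $N \ovt N$) while keeping the cocycle a generalized cocycle is exactly the nontrivial step you flag as ``the main obstacle''. The paper sidesteps this entirely.

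Second, even granting an untwisting $\omega_g = v^* \gamma_g \si_g(v)$, you still have to show that the untwisted data lands in $(\mcL Z)^\infty$ rather than in the larger $P \ovt R$ (or $N^\infty$). The paper does this via the basic construction $\cN_1 = \langle \cN \ovt R, e_N \rangle$: setting $T = w e_N w^*$, one has $\si_g(T) = \rho_g^* T \rho_g$ with $\rho_g \in P \ovt R$, and a commuting-square localization over finite subsets $\cF \subset \Gamma$ forces $T \in \langle P \ovt R, e_{\mcL Z} \rangle$. This is what pins the $\gamma_g$ into $(\mcL Z)^\infty$. Your sketch does not supply a substitute for this step. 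The same issue recurs in your uniqueness argument: setting $u := v' v^*$ gives an element of $N^\infty$, not of $(\mcL Z)^\infty$; the paper proves $v_1 v^* \in (\mcL Z)^\infty$ by the \emph{same} finite-$\cF$ localization trick (using $v_1 v^* = \vphi_g \si_g(v_1 v^*) \gamma_g^*$), which your ``direct computation using the support relations'' does not cover.
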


\begin{proof}
It is clear that the formulae in the theorem define generalized $1$-cocycles. Conversely, let $p \in N$ be a projection and assume that the partial isometries $(\om_g)_{g \in \Gamma}$ define a generalized $1$-cocycle for the action $(\si_g)_{g \in \Gamma}$ on $N$ with support $p$. Denote by $E_{\mathcal{Z}(N)} : \rL^\infty(X) \rtimes Z \rightarrow \rL^\infty(X/\overline{i(Z)})$ the trace preserving conditional expectation of $N$ onto its center. Since $p$ and $\si_g(p)$ are equivalent in $N$, we have $E_{\mathcal{Z}(N)}(p) = E_{\mathcal{Z}(N)}(\si_g(p)) = \si_g(E_{\mathcal{Z}(N)}(p))$ for all $g \in \Gamma$. By ergodicity of $\Gamma \actson X/\overline{i(Z)}$, it follows that $E_{\mathcal{Z}(N)}(p)$ is constant, i.e. $E_{\mathcal{Z}(N)}(p) = \tau(p)$ where $\tau$ denotes the tracial state on $N$. But then $p$ is equivalent in $N$ to any projection in $\mcL Z$ of trace $\tau(p)$. So we may assume that $p \in \mcL Z$.

Consider the action $Z \actson X \times K$ given by
$$z \cdot ((x_g)_g,k) = ((\al_g(z) x_g)_g, z k) \quad\text{for all}\;\; z \in Z, (x_g)_g \in X, k \in K \; .$$
Put $\cN := \rL^\infty(X \times K) \rtimes Z$. We embed $N \subset \cN$ by identifying the element $F u_z \in N$ with the element $(F \ot 1)u_z \in \cN$ whenever $F \in \rL^\infty(X), z \in Z$. Also $(\si_g)_{g \in \Gamma}$ extends naturally to a group of automorphisms of $\cN$ with $\si_g(1 \ot F) = 1 \ot (F \circ \alpha_{g^{-1}})$ for all $F \in \rL^\infty(K)$. Define $P = \rL^\infty(K) \rtimes Z$. View $P$ as a subalgebra of $\cN$ by identifying the element $Fu_z \in P$ with the element $(1 \ot F)u_z \in \cN$ whenever $F \in \rL^\infty(K), z \in Z$. Remark that the restriction of $\si_g$ to $P$ is given by $\si_g(Fu_z) = (F \circ \al_{g^{-1}}) u_{\al_g(z)}$ for all $F \in \rL^\infty(K), z \in Z$.

Denote by $R$ the hyperfinite II$_1$ factor. Let $r \in R$ be a projection of trace $\tau(p)$. Recall that $p \in \mcL Z$. Denoting by $E_{\mathcal{Z}(P \ovt R)}: P \ovt R \rightarrow \rL^\infty(K/\overline{i(Z)}) \ot 1$ the trace preserving conditional expectation of $P \ovt R$ onto its center, we have that $E_{\mathcal{Z}(P \ovt R)}(1 \ot r) = E_{\mathcal{Z}(P \ovt R)}(p \ot 1) = \tau(p)$. It follows that $1 \ot r$ and $p \ot 1$ are equivalent in $P \ovt R$. Take $u \in P \ovt R$ with $uu^* = 1 \ot r$ and $u^*u = p \ot 1$. Now view $u$ and $\omega_g$ in $\cN \ovt R$ and define
$$\nu_g := u \omega_g (\si_g \ot \id)(u^*) \in \cU(\cN \ovt rRr) \;.$$
One verifies that $\nu_g$ is a $1$-cocycle for the action $(\si_g \ot \id)_{g \in \Gamma}$ on $\cN \ovt rRr$, i.e.\ a family of unitaries satisfying $\nu_{gh} = \nu_g \, (\si_g \ot \id) (\nu_h)$ for all $g,h \in \Gamma$.

Define $\Delta : X \times K \recht X \times K$ by $\Delta((x_g)_g,k) = ((\al_g(k) x_g)_g, k)$ and denote by $\Delta_*$ the automorphism of $\rL^\infty(X \times K)$ given by $\Delta_*(F) = F \circ \Delta^{-1}$. One checks that the formula
$$\Phi : \rL^\infty(X) \ovt P \recht \cN : \Phi(F \ot G u_z) = \Delta_*(F \ot G) u_z$$
for all $F \in \rL^\infty(X) , G \in \rL^\infty(K)$ and $z \in Z,$ defines a $*$-isomorphism satisfying $\Phi \circ (\si_g \ot \si_g) = \si_g \circ \Phi$ for all $g \in \Gamma$. Define
$$\Psi = \Phi \ot \id_R: \rL^\infty(X) \ovt P \ovt R \recht \cN \ovt R \: .$$
Put $\mu_g := \Psi^{-1}(\nu_g)$. It follows that $(\mu_g)_{g \in \Gamma}$ is a 1-cocycle for the action $(\si_g \ot \si_g \ot \id)_{g \in \Gamma}$ on $\rL^\infty(X) \ovt P \ovt rRr$. By Popa's cocycle superrigidity theorem \cite[Theorem 5.5]{Po05} and directly applying $\Psi$ again, we find a unitary $v \in \cU(\cN \ovt rRr)$ and a 1-cocycle $\delta_g \in \cU(P \ovt rRr)$ for the action $(\si_g \ot \id)_{g \in \Gamma}$ on $P \ovt rRr$ such that $\nu_g = v^* \delta_g (\si_g \ot \id)(v)$ for all $g \in \Gamma$.

Define $w := u^* v u$ and $\rho_g := u^* \delta_g (\si_g \ot \id)(u)$ for all $g \in \Gamma$. Then $w \in \cU(p\cN p \ovt R)$. Furthermore $\rho_g \in pP\si_g(p) \ovt R$ is a family of partial isometries satisfying $\rho_g \rho_g^* = p \ot 1, \rho_g^* \rho_g = \si_g(p) \ot 1$ and $\rho_{gh} = \rho_g (\si_g \ot \id)(\rho_h)$ such that
$$\omega_g = w^* \, \rho_g \, (\si_g \ot \id)(w) \quad\text{for all}\;\; g \in \Gamma \; .$$

Consider the basic construction for the inclusion $N \subset \cN \ovt R$ denoted by $\cN_1 := \langle \cN \ovt R, e_N \rangle$. Put $T := w e_N w^*$. Since $\si_g(w) = \rho_g^* \, w \, \om_g$, it follows that $\si_g(T) = \rho_g^* \, T \, \rho_g$. Also note that $T \in \rL^2(\cN_1)$, that $T = p T$ and that $\Tr(T) = \tau(p)$.

Define for every finite subset $\cF \subset \Gamma$, the von Neumann subalgebras $N_\cF \subset N$ and $\cN_\cF \subset \cN$ given by
$$N_\cF := \rL^\infty(K^\cF) \rtimes Z \;\;\text{and}\;\; \cN_\cF := \rL^\infty(K^\cF \times K) \rtimes Z \; .$$
For every finite subset $\cF \subset \Gamma$, we have the following commuting square
$$
\begin{matrix}
N & \subset & \cN \ovt R\;\;\;\mbox{} \\
\cup & & \cup\;\;\;\mbox{} \\
N_\cF & \subset & \cN_\cF \ovt R \;\;.
\end{matrix}
$$
Remark furthermore that $N (\cN_\cF \ovt R)$ is dense in $\cN \ovt R$. It follows that we can identify the basic construction $\langle \cN_\cF \ovt R, e_{N_\cF} \rangle$ for the inclusion $N_\cF \subset \cN_\cF \ovt R$ with the von Neumann subalgebra of $\cN_1$ generated by $\cN_\cF \ovt R$ and $e_N$. Remark that for $\cF = \emptyset$, we get that the basic construction $P_1 := \langle P \ovt R, e_{\mcL Z} \rangle$ is isomorphic to the von Neumann subalgebra of $\cN_1$ generated by $P \ovt R$ and $e_N$. Denote by $\|\,\cdot\,\|_2$ the $2$-norm on $\rL^2(\cN_1)$ given by the semi-finite trace. Under this identification we have
$$ \overline{\bigcup_{\cF \subset \Gamma} \rL^2(\langle \cN_\cF \ovt R, e_{N_\cF} \rangle)}^{\|\,\cdot\,\|_2} = \rL^2(\langle \cN \ovt R, e_N \rangle) \;.$$

Note that $\langle P \ovt R, e_{\mcL Z} \rangle \subset \langle \cN_\cF \ovt R, e_{N_\cF} \rangle$ for all $\cF \subset \Gamma$. Denote by $E_\cF$ the trace preserving conditional expectation of $\cN_1$ onto $\langle \cN_\cF \ovt R, e_{N_\cF} \rangle$  Choose $\eps > 0$. Take a large enough finite subset $\cF \subset \Gamma$ such that
$$ \|T - E_\cF(T)\|_2 < \eps \; .$$
Since $T = \rho_g \si_g(T) \rho_g^*$, we get that $\|T - \rho_g \si_g(E_\cF(T)) \rho_g^*\|_2 < \eps$. As $\rho_g \in P \ovt R$, it follows that $T$ lies at distance at most $\eps$ from $\langle \cN_{\cF g^{-1}} \ovt R, e_{N_{\cF g^{-1}}} \rangle$. Since $T$ also lies at distance at most $\eps$ from $\langle \cN_\cF \ovt R, e_{N_\cF} \rangle$, we conclude that $T$ lies at distance at most $2 \eps$ from
$\langle \cN_{\cF \cap  \cF g^{-1}} \ovt R, e_{N_{\cF \cap  \cF g^{-1}}} \rangle$ for all $g \in \Gamma$. We can choose $g$ such that $\cF \cap \cF g^{-1} = \emptyset$ and conclude that $T$ lies at distance at most $2 \eps$ from $P_1$. Since $\eps > 0$ is arbitrary, it follows that $T \in P_1$.

So we can view $T$ as the orthogonal projection of $\rL^2(P \ovt R)$ onto a right $\mcL Z$ submodule of dimension $\tau(p)$. Since $p T = T$, the image of $T$ is contained in $p \rL^2(P \ovt R)$. Take projections $q_n \in \mcL Z$ with $\sum_n \tau(q_n) = \tau(p)$ and a right $\mcL Z$-linear isometry
$$\Theta : \bigoplus_{n \in \N} q_n \rL^2(\mcL Z) \recht p \rL^2(P \ovt R)$$
onto the image of $T$. Denote by $w_n \in p \rL^2(P \ovt R)$ the image under $\Theta$ of $q_n$ sitting in position $n$. Note that
$$w_n = p w_n \;\; , \;\; E_{\mcL Z}(w_n^* w_m) = \delta_{n,m} q_n \quad\text{and}\quad T = \sum_{n \in \N} w_n e_{\mcL Z} w_n^*$$
where in the last formula we view $T$ as an element of $P_1$. Identifying $P_1$ as above with the von Neumann subalgebra of $\cN_1$ generated by $P \ovt R$ and $e_N$, it follows that
$$T = \sum_{n \in \N} w_n e_N w_n^* \; .$$
Since $T = w e_N w^*$, we get that $p e_N = \sum_n w^* w_n e_N w_n^* w$. Denote $x := e_N w_n^* w$. It follows that $|x|^2 = w^* w_n e_N w_n^* w = w^* w_n e_N w_n^* w e_N = |x|^2 e_N$. Then also $|x| = |x| e_N$ and hence $e_N w_n^* w = e_N w_n^* w e_N$. So $w^* w_n \in \rL^2(\cN)$ preserves $\rL^2(N)$. We conclude that $w^* w_n \in \rL^2(N)$ for all $n$. It follows that $w_n^* w_m \in \rL^1(N)$ for all $n,m$. Since also $w_n^* w_m \in \rL^1(P \ovt R)$, we have $w_n^* w_m \in \rL^1(\mcL Z)$. But then,
$$\delta_{n,m} q_n = E_{\mcL Z}(w_n^* w_m) = w_n^* w_m \; .$$
So, the elements $w_n$ are partial isometries in $P \ovt R$ with mutually orthogonal left supports lying under $p$ and with right supports equal to $q_n$. Since $\sum_n \tau(q_n) = \tau(p)$, we conclude that the formula
$$W := \sum_n e_{1,n} \ot w_n$$
defines an element $W \in \B(\ell^2(\N),\C) \ovt P \ovt R$ satisfying $WW^* = p$, $W^* W = q$ where $q \in (\mcL Z)^\infty$ is the projection given by $q := \sum_n e_{nn} \ot q_n$. We also have that $v := W^* w$ belongs to $\B(\C,\ell^2(\N)) \ovt N$ and satisfies $v^* v = p$, $v v^* = q$.

Recall that $T = \rho_g \si_g(T) \rho_g^*$. Let $\gamma : \Gamma \recht \cU(q (\mcL Z)^\infty q)$ be the unique group homomorphism satisfying
$$\Theta(\gamma_g \xi) = \rho_g U_g \Theta(\xi) \quad\text{for all}\;\; g \in \Gamma , \xi \in \bigoplus_n q_n \rL^2(\mcL Z) \; $$
where $U_g$ is the unitary on $\rL^2(P \ovt R)$ implementing the action $\si_g$ on $P_1$.
By construction $W \gamma_g = \rho_g \si_g(W)$ for all $g \in \Gamma$. Since $\om_g = w^* \, \rho_g \, \si_g(w)$, we conclude that $\om_g = v^* \, \gamma_g \, \si_g(v)$.

We finally prove that $\gamma$ is unique up to unitary conjugacy. Assume that we also have a projection $q_1 \in (\mcL Z)^\infty$, a generalized 1-cocycle $\vphi_g$ for the action $\Gamma \actson (\mcL Z)^\infty$ with support $q_1$ and a partial isometry $v_1 \in \B(\C,\ell^2(\N)) \ovt N$ with $v_1^* v_1 = p$ and $v_1 v_1^* = q_1$, such that $\om_g = v_1^* \, \vphi_g \, \si_g(v_1)$. Then the element $v_1 v^* \in q_1 N^\infty q$ satisfies $v_1 v^* = \vphi_g \si_g(v_1 v^*) \gamma_g^*$ for all $g \in \Gamma$. Denote by $\tilde{E}_{\cF}$ the unique trace preserving conditional expectation $\tilde{E}_{\cF}: N \rightarrow N_\cF$. Choose $\epsilon > 0$ and denote by $\| \cdot \|_2$ the 2-norm on $N^\infty$ given by the semi-finite trace $\Tr \ot \tau$. Take a large enough finite subset $\mathcal{F} \subset \Gamma$ such that
$$ \| v_1 v^* - (\id_{\B(\ell^2(\N))} \ot \tilde{E}_{\cF})(v_1 v^*) \|_2 < \epsilon \; .$$
Since $v_1 v^* = \vphi_g \si_g(v_1 v^*) \gamma_g^*$, we also get that
$$\| v_1 v^* - \vphi_g (\id_{\B(\ell^2(\N))} \ot \si_g \circ \tilde{E}_{\cF})(v_1 v^*) \gamma_g^* \|_2 < \epsilon \; .$$
Recall that $\vphi_g, \gamma_g^* \in (\mcL Z)^\infty$. Similarly as above we conclude that $v_1 v^*$ lies at distance at most $2 \epsilon$ from $(\mcL Z)^\infty$. Since $\epsilon > 0$ is arbitrary, it follows that $v_1 v^* \in (\mcL Z)^\infty$, providing the required unitary conjugacy between $\gamma$ and $\vphi$.
\end{proof}

\end{document}